\newcolumntype{L}{>{\displaystyle}l}
\newcolumntype{C}{>{\displaystyle}c}
\newcolumntype{R}{>{\displaystyle}r}
\newcommand{\R}{\ensuremath{\mathbb{R}}}
\newcommand{\CO}{\ensuremath{\mathcal{O}}}
\newcommand{\ov}{\overline}
\newcommand{\la}{\lambda}
\newcommand{\ga}{\gamma}
\newcommand{\G}{\Gamma}
\newcommand{\T}{\theta}
\newcommand{\f}{\varphi}
\newcommand{\de}{\delta}
\def\p{\partial}
\def\e{\varepsilon}
\newtheorem {theorem} {Theorem} 
\newtheorem {proposition} [theorem] {Proposition}
\newtheorem {corollary} [theorem] {Corollary}
\newtheorem {lemma} [theorem] {Lemma}
\newtheorem {remark} {Remark}
\newtheorem {mtheorem} {Theorem}
\begin{document}
\title[]{Asymptotic behavior of periodic solutions in\\ one-parameter families of Li\'{e}nard equations}

\author[P. T. Cardin]{Pedro Toniol Cardin}

\address{Universidade Estadual Paulista (UNESP), Faculdade de Engenharia, Ilha Solteira, S\~ao Paulo, Brazil}
\email{pedro.cardin@unesp.br}

\author[D. D. Novaes]{Douglas Duarte Novaes}

\address{Universidade Estadual de Campinas (UNICAMP), Instituto de Matem\'{a}tica, Estat\'{i}stica e Computa\c{c}\~ao Cient\'{i}fica, Campinas, S\~ao Paulo, Brazil}
\email{ddnovaes@unicamp.br}

\subjclass{Primary: 34C07, 34C25, 34C26, 34C29, 34D15}

\keywords{Li\'enard equation, limit cycles, relaxation oscillation theory, averaging theory}

\begin{abstract}
In this paper, we consider one--parameter ($\la>0$) families of Li\'enard differential equations. We are concerned with the study on the asymptotic behavior of periodic solutions for small and large values of $\la>0$. To prove our main result we use the relaxation oscillation theory and a topological version of the averaging theory. More specifically, the first one is appropriate for studying the periodic solutions for large values of $\lambda$ and the second one for small values of $\lambda$. In particular, our hypotheses allow us to establish a link between these two theories.
\end{abstract}

\maketitle

\vspace{-1cm}
\section{Introduction and statement of the main result}\label{IntroductionSection}

In the qualitative theory of ordinary differential equations, the study of limit cycles is undoubtedly one of the main problems, which is far from trivial. Issues such as the non--existence, existence, uniqueness and other properties of limit cycles have been and continue to be studied extensively.

In particular, concerning Li\'{e}nard systems, there is a considerable amount of research on limit cycles, which began with Li\'{e}nard \cite{Lie1928}. Since the first results of Li\'{e}nard, many articles giving existence and uniqueness conditions for limit cycles of Li\'{e}nard systems have been published. See, e.g., \cite{SabVil2010,Vil1982,V83,V12}, the books \cite{Ye,Zha1992} and the references quoted therein.

In this paper, we consider one--parameter $\lambda >0$ families of Li\'enard differential equations of the form
\begin{equation}\label{Eq1}
x'' + \lambda f(x) x' + x = 0.
\end{equation}
In \eqref{Eq1} the prime indicates derivative with respect to the time $t$. Note that for $f(x) = x^2-1$ we get the classical van der Pol equation which models the oscillations of a triode vacuum tube \cite{M62,Pol1926}.

Taking $x' = y$ the differential equation \eqref{Eq1} can be converted into the first order differential system
\begin{equation}\label{S1}
x'=y, \quad y'= - x- \lambda f(x) y.
\end{equation}
In this paper, we shall assume conditions in order to ensure the existence of a unique limit cycle of system \eqref{S1} for all positive values of $\lambda.$ Accordingly, we are mainly concerned with the study on the asymptotic behavior of such limit cycle for small and large values of $\lambda>0$.

We start exposing our main hypotheses. Consider the auxiliary function
\begin{equation}\label{F}
F(x) = \int_0^x f(s) ds,
\end{equation}
and assume the following hypotheses:

\smallskip

\begin{itemize}
	\item[{\bf (A1)}] $f$ is a $C^2$--function on $\R$ having precisely two zeros, $x_M$ and $x_m$, such that $x_M<0<x_m$ and $f'(x_M)<0<f'(x_m)$. \smallskip

	\item[\textbf{(A2)}] The straight lines $y=F(x_M)$ and $y=F(x_m)$, passing through the points $A = (x_M,F(x_M))$ and $B = (x_m,F(x_m))$, intersect the graphic of $F$, $\textrm{Gr}(F)=\{(x,F(x));\,x\in \R\},$ at the points $A'\neq A$ and $B'\neq B,$ respectively. \smallskip
	
	\item[{\bf (A3)}] The differential system \eqref{S1} has at most one limit cycle.
\end{itemize}

\smallskip

We remark that hypotheses {\bf (A1)}, {\bf (A2)}, and {\bf (A3)} ensure the existence of a unique limit cycle $\Phi(\la)$ of system \eqref{S1}. The existence of $\Phi(\la)$ is a direct consequence of Dragil\"{e}v's Theorem \cite{Dragilev1952} (see Theorem \ref{DT} of Section 2). In fact, in order to get this, it is sufficient to ask the continuity of $f$, $f(0)<0$, and assume that $f(x)$ and $xF(x)$ are positive for $|x|$ large enough (see Theorem 1 from \cite{V83}).  In spite of that, we shall see that hypotheses {\bf (A1)} and {\bf (A2)} are necessary to get asymptotic informations on $\Phi(\la)$ for small and large values of $\lambda$.  

Hypothesis {\bf (A3)} is a qualitative assumption on the differential system \eqref{S1}. It is worthwhile to say that there are many analytical conditions for which it holds. Some of these conditions are provided in the Appendix (see Proposition \ref{ap:prop}).

\smallskip

Before stating our main result, we need to define some preliminary objects. Take $x_1<0<x_2$ such that $A'=(x_2,F(x_M))$ and $B'=(x_1,F(x_m))$. From {\bf (A2)} it follows that $x_1\neq x_m$ and $x_2\neq x_M$. From {\bf (A1)} and {\bf (A2)} we find exactly two zeros of $F$, $x_1^*$ and $x_2^*$, such that $x_1^*<0<x_2^*$. Clearly, $x_1<x_1^*<x_M$ and $x_m<x_2^*<x_2$. Denote
\begin{equation}\label{xr}
x^*=\min\{-x_1^*,x_2^*\}>0\quad\text{and}\quad r^*=\dfrac{\big(x_1^2+x_2^2\big)\big(F(x_M)-F(x_m)\big)}{x_1F(x_m)+x_2F(x_M)}>0.
\end{equation}

\smallskip

Let $\ov{A A'}$ and  $\ov{B B'}$ be the line segments joining $A$ to $A'$ and $B$ to $B'$, respectively. Let  $\widetilde{A'B}$ and $\widetilde{B'A}$ be the pieces of $\textrm{Gr}(F)$ joining $A'$ to $B$ and $B'$ to $A$, respectively. We define $\G_0\subset\R^2$ as being the closed curve given by the union 
\[
\G_0=\ov{A A'}\cup\widetilde{A' B}\cup\ov{BB'}\cup\widetilde{B' A}
\]
(see Figure \ref{FigIntro}(a)). We observe that the closed curve $\Gamma_0$ can be built from the hypotheses \textbf{(A1)} and \textbf{(A2)}. We note that if the hypothesis \textbf{(A2)} is not assumed, then it would not be possible to define $\Gamma_0$. Figure \ref{FigIntro}(b) illustrates a case in which the condition {\bf (A1)} is satisfied but \textbf{(A2)} is not. 

\begin{figure}[h]
	\begin{overpic}[width=5cm]{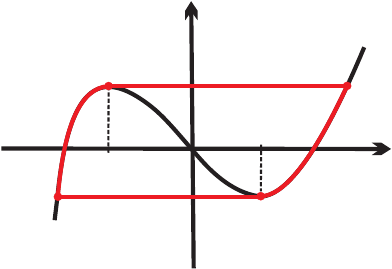}
		\put(25,50){$A$}
		\put(63,11){$B$}
		\put(91,45){$A'$}
		\put(6,16){$B'$}
		\put(65,50){$\textcolor{red}{\Gamma_0}$}
		\put(80,60){$y=F(x)$}
		\put(25,24){$x_M$}
		\put(64,34){$x_m$}
		\put(95,23){$x$}
		\put(42,63){$y$}
		\put(1,60){$(a)$}
	\end{overpic}\qquad \quad
	\begin{overpic}[width=5cm]{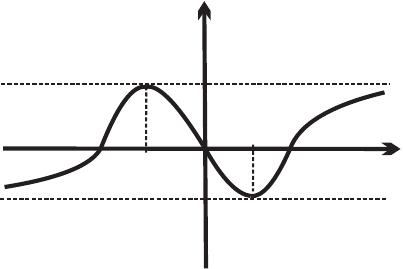}
		\put(83,35){$y=F(x)$}
		\put(95,23){$x$}
		\put(44,62){$y$}
		\put(1,60){$(b)$}
		\put(32,24){$x_M$}
		\put(60,33){$x_m$}
	\end{overpic}
	\caption{\footnotesize Panel (a) illustrates the closed curve $\Gamma_0$ and panel (b) shows an example where it is not possible to build the curve $\Gamma_0$.}
	\label{FigIntro}
\end{figure}

In what follows $S_\rho$ denotes the circle in $\R^2$ centered at the origin with radius equal to $\rho$ and $d_H(\cdot,\cdot)$ denotes the Hausdorff distance. Our main result is the following one.

\begin{mtheorem}\label{Thm1}
	Assume hypotheses {\bf (A1), (A2)}, and {\bf (A3)}. For each $\la>0$, let $\Phi(\la)$ denote the unique limit cycle of system \eqref{S1}. Then, the following statements hold:
	
	\begin{itemize}
		\item[(i)] The limit cycle $\Phi(\lambda )$ varies continuously on $\lambda$, for every $\lambda>0$. \vspace{.2cm}
		
		\item[(ii)] Let $P_{\lambda }(x,y)=\left(x,F(x)+y/\lambda \right)$ be an one--parameter family of diffeomorphims. Then
		\begin{equation}\label{Limit1ThmA}
		\displaystyle \lim_{\lambda \to\infty} d_H\big(\Phi(\lambda ),P_{\lambda }^{-1}(\G_0)\big)=0.
		\end{equation} 
		
		\item[(iii)] 
		There exists  $\rho\in(x^*,r^*)$ such that  
		\begin{equation}\label{Limit2ThmA}
		\displaystyle \lim_{\lambda \to 0}d_H\big(\Phi(\lambda ),S_{\rho}\big)=0.
		\end{equation}
		Furthermore, $\rho$ is the unique positive zero of the function 
		\begin{equation}\label{Fbar}
		\ov{F}(r)=\displaystyle\int_{-1}^{1}\sqrt{1-s^2}\,f(r\,s)ds.
		\end{equation}
	
		\item[(iv)] There exists $x_0>\max\{\rho,-x_1,x_2\}$ such that the limit cycle $\Phi(\la)$ is contained in the strip $\{(x,y)\in\R^2:\,-x_0<x<x_0\}$ for every $\la>0$. Moreover, the cycle $\Phi(\la)$ must intersect one of the straight lines $x=x_M$ or $x=x_m$.
		
		\item[(v)] Let $\ga:[-x_0,x_0]\times \R_+\rightarrow \R_+$ be the following continuous piecewise function 
		\begin{equation}\label{gamma}
		\ga(x;\la)=\left\{\begin{array}{ll}
		\sqrt{\dfrac{(x+x_0)(x_0-x-m\la(x+x_0))}{1+m\la}}, & 0<\la\leq-\dfrac{1}{2 m},\\
		\sqrt{(x+x_0)(x_0-x+8m^2\la^2 x_0)},& \la> -\dfrac{1}{2 m},\\
		\end{array}\right.
		\end{equation}
		where $m=\min\{f(x):\, -x_0\leq x\leq x_0\}<0$ and, for each $\la>0$,  let $R_{\la}$ denote the following compact region
		\[
		R_{\la}=\{(x,y)\in\R^2:\, -x_0\leq x\leq x_0,\, -\ga(-x;\la)\leq y\leq \ga(x;\la)\}.
		\]
		Then, $\Phi(\la)$ is contained in the interior of $R_{\la}$, for every $\la>0$ $($see Figure \ref{Rlambda}$)$.
		\end{itemize}
\end{mtheorem}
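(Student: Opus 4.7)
The plan is to prove the items in the order (i), (v), (iv), (ii), (iii), since (i) constructs the object $\Phi(\la)$ that the other items describe, (v) is the barrier estimate that (iv) then combines with an energy argument, and (ii) and (iii) are the two asymptotic regimes and are handled by disjoint techniques.

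For (i), hypothesis (A1) implies $f(0)<0$ (as $f<0$ on $(x_M,x_m)\ni 0$), so the origin is an unstable focus or node. In the Li\'enard coordinates $(u,v)=(x,y+\la F(x))$ the system becomes $\dot u=v-\la F(u)$, $\dot v=-u$, and $W=(u^{2}+v^{2})/2$ satisfies $\dot W=-\la u F(u)$, which is negative for $|u|$ large enough. Combined with the classical heteroclinic-arc construction for Li\'enard systems, this produces a positively invariant annulus enclosing the origin; Poincar\'e--Bendixson then gives at least one closed orbit, (A3) gives at most one, and uniqueness in an attracting annulus forces stability. Continuous dependence on $\la$ comes from the implicit function theorem applied to a Poincar\'e return map on a fixed transversal.

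For (v), I parametrize the upper arc of $\Phi(\la)$ as $y=y(x)>0$, so that $\frac{1}{2}(y^{2})'=-x-\la f(x)\,y\le -x-\la m\,y$ from $f\ge m$. The function $\ga(x;\la)^{2}$ is engineered so that in the first case it satisfies, with equality, the barrier ODE $\frac{1}{2}g'=-x-\la m\sqrt{g}$ with $g(-x_{0})=0$, while in the second case it dominates that ODE on $[-x_{0},x_{0}]$. A standard differential-inequality comparison (if $y^{2}$ touches $g$ from below at an interior point, its slope there is at most $g'$) then yields $y^{2}(x)\le\ga(x;\la)^{2}$ along the cycle; the lower half-plane is handled by the symmetry $(x,y)\mapsto(-x,-y)$ of the estimate, producing the lower barrier $-\ga(-x;\la)$. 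The bookkeeping is verifying the two barrier formulas and their continuous matching at $\la=-1/(2m)$; the choice $x_{0}>\max\{\rho,-x_{1},x_{2}\}$ is dictated by the need that $R_\la$ enclose the asymptotic loci from (ii) and (iii). Item (iv) then follows: $\Phi(\la)$ encloses the origin by the Poincar\'e index theorem, and if $\Phi(\la)\subset\{x_M<x<x_m\}$ then $f<0$ along the cycle, so that $\frac{d}{dt}\frac{x^{2}+y^{2}}{2}=-\la f(x)y^{2}\ge 0$ with equality only on $\{y=0\}$, contradicting periodicity. Hence $\Phi(\la)$ must meet $x=x_M$ or $x=x_m$, and the horizontal containment is immediate from (v).

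For (ii), the Li\'enard change $P_\la$ transforms \eqref{S1} into $\dot X=\la(Y-F(X))$, $\dot Y=-X/\la$; rescaling $\tau=\la t$ and setting $\e=1/\la^{2}$ produces a standard slow--fast system whose critical manifold is $\{Y=F(X)\}$ and whose singular cycle, under (A1)--(A2), is exactly $\G_{0}$. Classical relaxation-oscillation theory (Mishchenko--Rozov, Dumortier--Roussarie) then provides Hausdorff convergence $P_\la(\Phi(\la))\to\G_{0}$ in $(X,Y)$ coordinates at rate $O(\e^{2/3})$, which beats the $O(\la)$ growth of the Lipschitz constant of $P_\la^{-1}$ and therefore yields \eqref{Limit1ThmA} after pulling back. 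For (iii), in polar coordinates one obtains $dr/d\theta=\la\, r f(r\cos\theta)\sin^{2}\theta+O(\la^{2})$ on a compact annulus bounded away from the origin; first-order averaging identifies the asymptotic radius of the cycle with a positive zero of $\ov F$, and uniqueness of $\Phi(\la)$ from (i) forces that zero to be unique, giving $\rho$. The subtlest step is establishing the explicit enclosure $\rho\in(x^{*},r^{*})$, which amounts to a direct sign computation $\ov F(x^{*})<0<\ov F(r^{*})$ using the geometric data of (A1)--(A2); the specific formula for $r^{*}$ in \eqref{xr} is designed precisely to make this enclosure hold.
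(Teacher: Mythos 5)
Your proposal follows the paper's broad strategy (Dragil\"ev-type existence plus \textbf{(A3)} for item (i), slow--fast/relaxation theory for (ii), first-order averaging for (iii), a barrier curve for (v)), and your treatment of (ii) is in one respect more careful than the paper's, since you explicitly weigh the $O(\e^{2/3})$ convergence rate against the $O(\la)$ Lipschitz constant of $P_\la^{-1}$, a point the paper passes over. However, there are two genuine gaps. The more serious one is the circular ordering (i), (v), (iv): you claim the horizontal containment in (iv) ``is immediate from (v)'', but the barrier $\ga(\cdot;\la)$ and the region $R_\la$ are \emph{defined} in terms of $x_0$, so (v) cannot produce $x_0$. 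The real content of (iv) is that a \emph{single} $x_0$ works uniformly for all $\la>0$, and the paper obtains this from items (ii) and (iii): the functions $\xi^{\pm}(\la)=\max/\min(\pi_x\Phi(\la))$ are continuous on $(0,\infty)$ by item (i) and have finite limits ($\pm\rho$ as $\la\to0$ and $x_1,x_2$ as $\la\to\infty$), hence are bounded. Without first proving (ii) and (iii) you have no mechanism to rule out that the amplitude of $\Phi(\la)$ blows up at some finite $\la$ or as $\la\to\infty$, so your order of proof must be rearranged and the boundedness argument supplied.

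The second gap is in the continuity claim of item (i): the implicit function theorem applied to the return map requires the fixed point to be hyperbolic (derivative of the return map different from $1$), and nothing in \textbf{(A1)}--\textbf{(A3)} guarantees this --- a unique stable limit cycle can be non-hyperbolic. The paper instead uses a purely topological argument: an $\e$-neighborhood of $\Phi(\la_0)$ with the flow transversally inward on its boundary persists for nearby $\la$, Poincar\'e--Bendixson gives a cycle inside it, and \textbf{(A3)} identifies that cycle with $\Phi(\la)$. Two smaller points: in (iii) the deduction that uniqueness of $\Phi(\la)$ forces $\ov F$ to have a unique positive zero is not valid (zeros of Brouwer degree zero need not produce cycles, so \textbf{(A3)} does not exclude them); the paper proves uniqueness of the zero directly from the sign estimates $M_1>0$ on $[0,x^*]$ and $M_1<0$ on $(r^*,\infty)$, and these estimates --- the arctangent inequality of Lemma \ref{av:lemma} together with properties {\bf(p$_1$)}--{\bf(p$_4$)} --- are precisely the ``direct sign computation'' you name but do not carry out. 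Also the polar equation should read $dr/d\T=-\la\,r f(r\cos\T)\sin^2\T+O(\la^2)$; the sign matters for identifying the stability and the degree $d_B(M_1,V,0)=-1$ used to apply the topological averaging theorem.
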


The limit given in \eqref{Limit1ThmA} means that, for sufficiently large values of $\lambda$, after a change of coordinates, the limit cycle $\Phi(\lambda )$ approaches the closed curve $\Gamma_0$. Similarly, from \eqref{Limit2ThmA} one conclude that, for sufficiently small values of $\lambda$, the limit cycle $\Phi(\lambda )$ approaches the circle $S_{\rho}$.

\begin{remark}
The proof of Theorem \ref{Thm1} involves two distinct theories, namely {\it relaxation oscillation theory} and {\it averaging theory}. At first, hypotheses  {\bf (A1)} and {\bf (A2)} were strictly conceived in order to guarantee the existence of a relaxation oscillation for system \eqref{Eq1} when $\la$ is sufficiently large. This corresponds to item (ii) of Theorem \ref{Thm1}. Roughly speaking, a relaxation oscillation is a continuous family of periodic solutions of a fast-slow system converging to a singular orbit (see Section 3 for a formal definition). Remarkably, the same hypotheses, {\bf (A1)} and {\bf (A2)}, allow the use of a topological version of the averaging theory to study the asymptotic behavior of such a limit cycle for small values of $\la>0.$ This corresponds to statement (iii) of Theorem \ref{Thm1}. Therefore, for the family of Li\'{e}nard differential equations \eqref{Eq1} under the assumptions of Theorem \ref{Thm1}, these two distinct theories provide complementary informations, which allow to study the behavior of the  limit cycle for every values of the parameter. This, in turn, establishes a link between these theories, which we understand to be the main novelty of this paper.
\end{remark}

The proof of Theorem \ref{Thm1} will be split in several results. Firstly, in Section 2, we prove the existence of the limit cycle $\Phi(\lambda )$, for every $\la>0$, and its continuous dependence on the parameter $\lambda$. Item (i) of Theorem \ref{Thm1} follows from Proposition \ref{existence}. After in Section 3, using the {\it relaxation oscillation theory}, we study the asymptotic behavior of the limit cycle $\Phi(\lambda )$ when $\lambda$ takes large values. Item (ii) of Theorem \ref{Thm1}  follows from Proposition \ref{PropRelaxationOscillation} and Corollary \ref{c1}. Similarly in Section 4, we study the asymptotic behavior of the limit cycle $\Phi(\lambda )$ when $\lambda$ takes small values, but now using mainly the {\it averaging theory}. Item (iii) of Theorem \ref{Thm1}  follows from Proposition \ref{av:prop} and Corollary \ref{c2}. Items (iv) and (v) of Theorem \ref{Thm1} are proved in Section 5, where an estimative of the amplitude growth of the limit cycle $\Phi(\lambda )$ is provided. Item (v) follows from Corollary \ref{RR}. Section 6 is devoted to study some polynomial and non-polynomial examples of differential systems where Theorem \ref{Thm1} can be applied, including the classical van der Pol equation and its generalization. In Section \ref{ConclusionSection} we present our conclusions emphasizing the link established between the relaxation oscillation theory and the averaging theory. Finally, sufficient analytical conditions in order to guarantee hypothesis {\bf (A3)} are presented in the Appendix section.

\section{Existence of the limit cycle and continuous dependence on $\lambda $}

This section is devoted to prove item (i) of Theorem \ref{Thm1}. For the sake of completeness, assuming hypotheses {\bf (A1)} and {\bf (A2)}, we shall check the Dragil\"{e}v's hypotheses for system \eqref{S1}, which   guarantee the existence of a limit cycle $\Phi(\la).$ Hypothesis {\bf (A3)} will ensure its uniqueness and continuous dependence on $\la$.

\begin{proposition}\label{existence}
Assume hypotheses {\bf (A1), (A2)}, and {\bf (A3)}. Then, for every $\lambda >0,$ the differential system \eqref{S1} has a unique stable limit cycle $\Phi(\lambda )$ which depends continuously on $\lambda $.
\end{proposition}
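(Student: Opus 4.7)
The plan is to use the Liénard change of coordinates $v = y + \lambda F(x)$ to rewrite system \eqref{S1} in the classical Liénard form $\dot x = v - \lambda F(x)$, $\dot v = -x$, in which all the geometric information supplied by \textbf{(A1)} and \textbf{(A2)} is encoded in the shape of the graph of $F$: $F(0)=0$, $F$ has a local maximum at $x_M<0$ with $F(x_M)>0$ and a local minimum at $x_m>0$ with $F(x_m)<0$, and, thanks to \textbf{(A2)}, the horizontal levels through those critical values meet the graph again at the abscissas $x_1<x_M$ and $x_2>x_m$. A direct computation shows that the origin is the unique equilibrium, and its linearization has trace $-\lambda f(0)>0$ (since \textbf{(A1)} forces $f(0)<0$) and determinant $1$, so the origin is a repelling focus or node for every $\lambda>0$.

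Next I would construct a compact positively invariant region $R_\lambda$ in the Liénard plane whose interior contains the origin, with boundary built from short vertical and horizontal segments together with arcs parallel to the graph $v=\lambda F(x)$, arranged so that the vector field points strictly inward along each smooth piece. The existence of the intersection abscissas $x_1,x_2$ given by \textbf{(A2)} is precisely what allows the boundary to close up on both sides of the graph; without them, no such region exists. An application of the Poincaré--Bendixson theorem to $R_\lambda\setminus\{0\}$ then yields at least one periodic orbit, and hypothesis \textbf{(A3)} forces exactly one, which we denote by $\Phi(\lambda)$. Asymptotic stability follows immediately: inside $\Phi(\lambda)$ the only equilibrium is the repelling origin and no other periodic orbit is permitted, so Poincaré--Bendixson forces every interior orbit to spiral outward to $\Phi(\lambda)$; the same argument applied to $R_\lambda$ minus the interior of $\Phi(\lambda)$ yields the spiraling from outside.

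For continuous dependence on $\lambda$, I would fix $\lambda_0>0$ and observe that the construction above can be carried out uniformly for $\lambda$ in a compact neighborhood of $\lambda_0$, so that all the corresponding limit cycles lie in a common compact set $K$ and remain at positive distance from the origin. Given any sequence $\lambda_n \to \lambda_0$, compactness of closed subsets of $K$ in the Hausdorff metric yields a subsequence along which $\Phi(\lambda_{n_k})$ converges in $d_H$ to a compact set $\Phi^*$. Continuous dependence of solutions on parameters, combined with a uniform lower bound on the speed of the flow on $K$ away from the origin (which translates into a uniform upper bound on the periods of the $\Phi(\lambda_{n_k})$), implies that $\Phi^*$ is itself a periodic orbit of the $\lambda_0$-system containing no equilibrium. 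Hypothesis \textbf{(A3)} then forces $\Phi^* = \Phi(\lambda_0)$, and since this holds for every convergent subsequence, $d_H(\Phi(\lambda_n),\Phi(\lambda_0))\to 0$.

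The main obstacle will be the construction of the trapping region $R_\lambda$: calibrating its boundary pieces so that the vector field points strictly inward everywhere, and arranging for this to hold uniformly as $\lambda$ varies, is where hypothesis \textbf{(A2)} enters in an essential way and where the bulk of the technical work lies.
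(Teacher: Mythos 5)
Your overall strategy is sound and is essentially the classical one, but the existence step as written has a real gap: the entire content of the existence claim is the construction of the compact positively invariant annular region, and you explicitly defer that construction (``calibrating its boundary pieces \dots is where the bulk of the technical work lies''). Until that region is actually exhibited and the inward-pointing inequalities are verified from \textbf{(A1)} and \textbf{(A2)}, nothing has been proved. The paper avoids redoing this work by invoking Dragil\"ev's theorem (Theorem~\ref{DT}): one checks that $g(x)=x$ and $F$ are locally Lipschitz, that $G(x)=x^2/2\to+\infty$, that \textbf{(A2)} gives the sign conditions \textbf{(B3)} with $a_1=x_M$, $a_2=x_m$, and that \textbf{(B4)} holds with $k>\max\{-x_1,x_2\}$, $b_1=F(x_m)$, $b_2=F(x_M)$. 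That theorem packages exactly the trapping-region construction you are proposing to rebuild from scratch, so if you want a self-contained argument you must carry it out; otherwise you should cite such a result. (Your preliminary observations are correct: the origin is the unique equilibrium, $f(0)<0$ follows from \textbf{(A1)}, and the origin is repelling; the Li\'enard change of variables is also the one the paper uses later, in Section~3.)

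Your continuity argument is genuinely different from the paper's and is workable, but it is the more delicate of the two. You argue by sequential compactness in the Hausdorff metric and must then show that the subsequential limit $\Phi^*$ is a single periodic orbit of the $\lambda_0$-system rather than, say, a degenerate set or a graphic; this requires the uniform bounds on periods and on the distance to the origin that you only sketch (the positive distance from the origin does follow from uniform repulsion near the origin for $\lambda$ near $\lambda_0$, but this should be said). The paper's argument runs in the opposite direction and is shorter: since $\Phi(\lambda_0)$ is a \emph{stable} cycle, one takes an $\e$-neighborhood $U_\e$ with the flow crossing $\p U_\e$ inward, observes that this transversality persists for $|\lambda-\lambda_0|<\de$ by continuous dependence on parameters, applies Poincar\'e--Bendixson inside $\ov{U_\e}$ to get a cycle $\widetilde\Phi(\lambda)\subset U_\e$, and then uses \textbf{(A3)} to conclude $\widetilde\Phi(\lambda)=\Phi(\lambda)$, hence $d_H(\Phi(\lambda),\Phi(\lambda_0))<\e$. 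Both routes lean on \textbf{(A3)} at the same point, namely to identify the nearby cycle with $\Phi(\lambda)$; if you keep your version, fill in the identification of $\Phi^*$ as a periodic orbit explicitly.
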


To prove Proposition \ref{existence} we need the next result due to Dragil\"{e}v \cite{Dragilev1952}. For a proof see Theorem 5.1 of \cite{Ye}. It is worth mentioning also that, in \cite{CioVil2015}, the authors provide an extension of Dragilev's theorem.

\begin{theorem}[Dragil\"{e}v \cite{Dragilev1952}]\label{DT}
Consider the differential system
\begin{equation}\label{DTs1}
x'=y, \quad
y'= - g(x)-  f(x) y.
\end{equation}
Suppose that:
\begin{itemize}
\item[{\bf (B1)}] The functions $F(x)$ and $g(x)$ are locally Lipschitz, where $F(x)=\int_0^xf(s)ds$.
\item[{\bf (B2)}] $x g(x)>0$ for $x\neq 0$, $G(\pm\infty)=+\infty$, where $G(x)=\int_0^xg(s)ds$; 
\item[{\bf (B3)}] There exist $a_1<0<a_2$, such that $F(x)>0$, for $a_1\leq x<0$, and $F(x)<0$, for $0<x\leq a_2$.
\item[{\bf (B4)}] There exist $k>\max\{-a_1,a_2\}$, and $b_1<b_2$ such that $F(x) \leq b_1$ if $x<-k$, and $F(x) \geq b_2$ if $x>k$.
\end{itemize}
Then, the differential system \eqref{DTs1} has at least one stable limit cycle.
\end{theorem}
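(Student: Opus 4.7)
I would prove Theorem~\ref{DT} by the classical Liénard-plane method combined with Poincaré--Bendixson.

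First I would perform the Liénard transformation $u=y+F(x)$, which converts \eqref{DTs1} into the equivalent system
\begin{equation*}
x'=u-F(x),\qquad u'=-g(x).
\end{equation*}
Hypothesis \textbf{(B2)} implies that $g$ has a unique zero at $0$, so the origin is the only equilibrium. Hypothesis \textbf{(B1)} guarantees existence and uniqueness of local solutions, so the planar flow is well defined.

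Next I would use the energy function $V(x,u)=G(x)+u^{2}/2$. By \textbf{(B2)}, $V$ is positive definite and proper (its level sets are compact, since $G(\pm\infty)=+\infty$). Along trajectories,
\begin{equation*}
\dot V=g(x)(u-F(x))-u\,g(x)=-g(x)F(x).
\end{equation*}
By \textbf{(B3)} and \textbf{(B2)}, $g(x)F(x)<0$ for $x\in[a_{1},a_{2}]\setminus\{0\}$, hence $\dot V>0$ on a punctured neighborhood of the origin. The crossings of the $u$-axis are transverse (since $x'=u\neq 0$ there), so every sufficiently small level set $\{V=c_{-}\}$ is crossed strictly outward. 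This shows the origin is a repeller and provides the \emph{inner} boundary of the annular region I want to construct.

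The technical core of the proof is to build an \emph{outer} positively invariant boundary. Here is where I would invoke \textbf{(B4)}: for $|x|>k$, $F$ is bounded away from $[b_{1},b_{2}]$ on opposite sides, and $g(x)$ has the sign of $x$ by \textbf{(B2)}. Following the standard construction (see, e.g., Ye~\cite{Ye} or Zhang~\cite{Zha1992}), I would assemble a closed piecewise curve $\Gamma_{R}$ for $R$ sufficiently large, consisting of two vertical segments at $x=\pm R$, two arcs of a large level set $\{V=c_{+}\}$ cut by those verticals, and suitable horizontal segments at $u=\pm U$ if necessary to close up. On each piece I would verify that the vector field $(u-F,-g)$ has nonpositive outward normal component: on the level-set arcs, $\dot V=-g(x)F(x)\le 0$ once $|x|>k$ and $F$ has the sign of $x$ there (enforced by \textbf{(B4)} together with \textbf{(B3)}); on the vertical and horizontal pieces one chooses the constants so that, for instance, $x'=u-F(x)$ has sign opposite to that of $x$. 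Letting $D$ be the compact region bounded by $\Gamma_{R}$ on the outside and a small level set $\{V=c_{-}\}$ on the inside, $D$ is positively invariant and contains no equilibria.

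Finally, Poincaré--Bendixson applied to any trajectory starting in $D$ guarantees that its $\omega$-limit set is a periodic orbit in $D$, so \eqref{DTs1} has at least one limit cycle. To extract a \emph{stable} one, I would consider the $\omega$-limit set of a trajectory entering $D$ across the outer boundary $\Gamma_{R}$: by positive invariance it lies in $D$, and being a periodic orbit in the plane it is either a stable limit cycle or a cycle approached from outside (hence stable from outside). An appeal to the planar theory of limit cycles (the outermost cycle of a Poincaré--Bendixson region that attracts outside orbits) then yields the stability claim.

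The main obstacle I anticipate is the piecewise construction of $\Gamma_{R}$ under the weak form of \textbf{(B4)}, which only controls $F$ outside $[-k,k]$ by the constants $b_{1}<b_{2}$ rather than by sign. I would need to choose the outer level $c_{+}$ and the parameter $R$ jointly so that the level-set arcs lie entirely in $|x|>k$ on the portions where the sign of $F$ matters, closing any remaining gaps with transverse straight segments. The rest of the argument (energy estimate near the origin and Poincaré--Bendixson) is standard.
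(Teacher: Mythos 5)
The paper does not actually prove Theorem~\ref{DT}: it is quoted as a classical result, with the proof deferred to Theorem 5.1 of \cite{Ye}. So the comparison is with the standard proof in the literature, whose overall architecture you have reproduced correctly: pass to the Li\'enard plane $x'=u-F(x)$, $u'=-g(x)$ (where \textbf{(B1)} gives a locally Lipschitz field), show the origin repels via $V=G+u^2/2$ and \textbf{(B2)}--\textbf{(B3)}, trap orbits in an annulus, and apply Poincar\'e--Bendixson. The inner boundary and the reduction to the Li\'enard plane are fine.

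The gap is in the outer boundary, and it is not merely technical. You claim that \textbf{(B4)} together with \textbf{(B3)} forces $F$ to have the sign of $x$ for $|x|>k$, and you use this to get $\dot V=-g(x)F(x)\le 0$ on the outer level-set arcs. That is false: \textbf{(B4)} only asserts $F\le b_1$ on $x<-k$ and $F\ge b_2$ on $x>k$ with $b_1<b_2$, and $b_1,b_2$ carry no sign information (e.g.\ $b_1=1<b_2=2$ is admissible, in which case $F$ may be positive for $x<-k$ and then $\dot V=-gF>0$ there). Moreover nothing at all is assumed about $F$ on $(a_2,k)$ and $(-k,a_1)$, so $\dot V$ has no sign on those strips either; level sets of $V$ simply are not inward-transversal, and adjusting $c_+$ and $R$ cannot repair this. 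Note that your argument never actually uses the inequality $b_1<b_2$, which is the heart of \textbf{(B4)}. The standard way it enters is through the shifted energies $\Lambda_c(x,u)=G(x)+(u-c)^2/2$ with $c\in[b_1,b_2]$, for which $\dot\Lambda_c=g(x)\bigl(c-F(x)\bigr)\le 0$ on both regions $x<-k$ and $x>k$ simultaneously (this requires $b_1\le c\le b_2$), combined with the estimate $d\Lambda_c/dx=g(x)\bigl(c-F(x)\bigr)/\bigl(u-F(x)\bigr)=O(1/|u|)$ showing that the possible gain of $\Lambda_c$ while crossing the strip $|x|\le k$ at large $|u|$ is negligible; one also needs $G(\pm\infty)=+\infty$ to guarantee large orbits turn around and return to the $u$-axis. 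Classically this is packaged not as a single transversal curve but as an energy balance over one full revolution of the return map on the $u$-axis: the loss of order $(b_1-b_2)$ times the height of the orbit outside the strip dominates the bounded gain inside, so large orbits spiral inward, and the outer boundary of the Poincar\'e--Bendixson annulus is taken to be a trajectory arc closed by a segment of the $u$-axis. Without replacing your level-set construction by an argument of this type, the proof does not go through.
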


\begin{proof}[Proof of Proposition \ref{existence}]
First of all we shall see that all the conditions, {\bf (B1)}--{\bf (B4)}, of Dragil\"{e}v's Theorem are fulfilled. From the hypothesis {\bf (A1)}, $F(x)$ is a differentiable function and, for the differential system \eqref{S1}, $g(x)=x$. Therefore, $F(x)$ and $g(x)$ are locally Lipschitz and then condition {\bf (B1)} is satisfied. Condition {\bf (B2)} is trivially true because $G(x)=x^2/2$. Now taking $a_1=x_M$ and $a_2=x_m$ we see that hypothesis {\bf (A2)} implies condition {\bf (B3)}. Finally condition {\bf (B4)} is assured by hypotheses {\bf (A1)} and {\bf (A2)} if we take $k>\max\{-x_1,x_2\}$, $b_1=F(x_m)$, and $b_2=F(x_M)$. 

From Dragil\"{e}v's Theorem we have assured, for each $\lambda >0$, the existence of a stable limit cycle $\Phi(\lambda )$ of the differential system \eqref{S1}. Clearly, hypothesis {\bf (A3)} implies its uniqueness for each $\lambda >0$.

Given $\lambda _0>0$ it remains to prove that for every $\e>0$ there exists $\delta>0$ such that for all $\lambda >0$ satisfying $|\lambda -\lambda _0|<\delta$ it follows that $d_H(\Phi(\lambda ),\Phi(\lambda _0))<\e$. To see that let $U_\e$ be an $\e$-neighborhood of $\Phi(\lambda _0)$, that is $d(p,q)<\e$ for every $p,q\in U_\e$. Here $d(\cdot,\cdot)$ denotes the usual distance in $\R^2$. Since $\Phi(\lambda _0)$ is a stable limit cycle we can assume that the flow of system \eqref{S1}, for $\lambda =\lambda _0$, is transversal to the boundary $\p U_\e$ of $U_\e$ and crosses $\p U_\e$ in the exterior-to-interior direction. From the continuous dependence of the solutions of \eqref{S1} on the parameter $\lambda $ we can find $\delta>0$ such that the flow of system \eqref{S1} still remains traversal to $\p U_\e$ for every $\lambda >0$ such that $|\lambda -\lambda _0|<\de$.  Therefore, for every $\lambda \in(\lambda _0-\de,\lambda _0+\de)$, $\ov{U_\e}$ is a positively invariant compact set for the flow of system \eqref{S1}. Applying now the Poincar\'{e}--Bendixon Theorem we conclude that the differential system \eqref{S1} admits a stable limit cycle $\widetilde\Phi(\lambda )\subset U_\e$ for every $\lambda \in(0,\lambda _0+\de)$. Consequently $d_H(\widetilde{\Phi}(\lambda ),\Phi(\lambda _0))<\e$. Finally hypothesis {\bf (A3)} implies that $\Phi(\lambda )=\widetilde{\Phi}(\lambda )$, and then we conclude that $\Phi$ is continuous at $\lambda =\lambda _0$. Since $\lambda _0>0$ was arbitrarily chosen we obtain the continuity of $\Phi$ for all $\lambda >0$.
\end{proof}

\section{Asymptotic behavior of the limit cycle for large values of  $\lambda$}

In this section, we will study the asymptotic behavior of the limit cycle $\Phi(\lambda )$ obtained from Theorem \ref{Thm1} for sufficiently large values of $\lambda$. For doing this we will use the relaxation oscillation theory occurring in slow--fast singularly perturbed systems. We start this section describing this theory.

Consider a two--dimensional slow--fast differential system of the form
\begin{equation}\label{SlowFast1}
\mu \dfrac{dx}{ds} = g(x,y,\mu), \qquad \dfrac{dy}{ds} = h(x,y,\mu),
\end{equation}
where $g$ and $h$ are $C^r$--functions with $r \geq 3$. 
For positive values of $\mu$, system \eqref{SlowFast1} is mutually equivalent to
\begin{equation}\label{SlowFast2}
\dfrac{dx}{d\tau} = g(x,y,\mu), \qquad \dfrac{dy}{d\tau} = \mu h(x,y,\mu),
\end{equation}
which is obtained after the time rescaling $\tau = s/\mu$.
Systems \eqref{SlowFast1} and \eqref{SlowFast2} are referred to as \textit{slow system} and \textit{fast system}, respectively. A usual way to treat with slow--fast systems is through the geometric singular perturbation theory (GSPT). The idea is to study the (limiting) fast and slow dynamics separately and then combine results on these two limiting behaviours in order to obtain information on the dynamics of the full system \eqref{SlowFast1} (or \eqref{SlowFast2}) for small values of $\mu$. 

The limiting behaviours for $\mu \to 0$ on the slow and fast time scales are given, respectively, by
\begin{equation}\label{SlowFast3}
0 = g(x,y,0), \qquad \dfrac{dy}{ds} = h(x,y,0),
\end{equation}
which will be referred to as \textit{reduced problem}, and
\begin{equation}\label{SlowFast4}
\dfrac{dx}{d\tau} = g(x,y,0), \qquad \dfrac{dy}{d\tau} = 0,
\end{equation}
which we will be referred to as \textit{layer problem}. The phase space of \eqref{SlowFast3} is the so--called \textit{critical manifold} defined by $\mathcal{S} = \{(x,y) \in \R^2 : g(x,y,0) = 0\}$. On the other hand, $\mathcal{S}$ is the set of equilibrium points for the layer problem \eqref{SlowFast4}. Among other things, Fenichel theory \cite{CT,Fen1979}  guarantees the persistence of a normally hyperbolic subset $\mathcal{S}_0 \subseteq \mathcal{S}$ as a slow manifold $\mathcal{S}_{\mu}$ of \eqref{SlowFast1} (or \eqref{SlowFast2}) for small enough values of $\mu > 0$. Moreover, the flow on $\mathcal{S}_{\mu}$ is a small perturbation of the flow of \eqref{SlowFast3} on $\mathcal{S}_0$. Normal hyperbolicity of $\mathcal{S}_0$ means that $(\partial g/\partial x) (x,y)\neq 0$ for all $(x,y) \in \mathcal{S}_0$. That is, $\mathcal{S}_0$ is normally hyperbolic if for each $ (\bar{x},\bar{y}) \in \mathcal{S}_0$, we have that $\bar{x}$ is a hyperbolic equilibrium point of $(dx/d\tau) = g(x,\bar{y},0)$. 

Generically, a non--normally hyperbolic point is a fold point of $\mathcal{S}$. In the case when the critical manifold $\mathcal{S}$ has non--normally hyperbolic points, interesting global phenomena can occur. For instance, an interesting kind of global phenomenon is the relaxation oscillations. A \textit{relaxation oscillation} is a periodic solution $\Gamma_{\mu}$ of the slow--fast system \eqref{SlowFast1} that converges to a singular trajectory $\Gamma_0$, when $\mu \to 0$, with respect to Hausdorff distance. A singular trajectory means a curve obtained as concatenations of trajectories of the reduced and layer problems (with a consistent orientation) forming a closed curve.


In what follows we describe a well known prototypical situation where a relaxation oscillation exists (see \cite{KruSzm2001,MR80,Pon57}). 



\begin{itemize}
	\item[\textbf{(C1)}] The critical manifold $\mathcal{S}$ can be written in the form $y=\phi(x)$ and the function $\phi$ has precisely two critical points, one minimum $\ov x_m$ and one maximum $\ov x_M$, both non--degenerate (folds). \vspace{.2cm}
	
	\item[\textbf{(C2)}] The fold points are generic, i.e. 
	$$\dfrac{\partial^2 g}{\partial x^2}(\ov x_m,\phi(\ov x_m),0) \neq 0, \quad \dfrac{\partial g}{\partial y}(\ov x_m,\phi(\ov x_m),0) \neq 0, \quad h(\ov x_m,\phi(\ov x_m),0) \neq 0,$$ $$\dfrac{\partial^2 g}{\partial x^2}(\ov x_M,\phi(\ov x_M),0) \neq 0, \quad \dfrac{\partial g}{\partial y}(\ov x_M,\phi(\ov x_M),0) \neq 0, \quad h(\ov x_M,\phi(\ov x_M),0) \neq 0.$$
	
	\vspace{.2cm}
	
	\item[\textbf{(C3)}] $(\partial g/\partial x) < 0$ on $\mathcal{S}_l = \{(x,\phi(x)) : x < \ov x_m\}$ and on $\mathcal{S}_r = \{(x,\phi(x)) : x > \ov x_M\}$, and $(\partial g/\partial x)  > 0$ on $\mathcal{S}_m = \{(x,\phi(x)) : \ov x_m < x < \ov x_M\}$. This means that for the layer problem \eqref{SlowFast4} the branches $\mathcal{S}_l$ and $\mathcal{S}_r$ are attracting while $\mathcal{S}_m$ is repelling. \vspace{.2cm}
	
	\item[\textbf{(C4)}] The slow (reduced) flow on $\mathcal{S}_l$ and $\mathcal{S}_r$ satisfies that $(dy/ds) < 0$ and $(dy/ds)  > 0$, respectively.\vspace{.2cm} 
\end{itemize}
Figure \ref{fig1}(a) illustrates the phase portraits of the reduced and layer problems \eqref{SlowFast3} and \eqref{SlowFast4}  (assuming the hypotheses \textbf{(C1)}--\textbf{(C4)}).

\begin{figure}
\begin{overpic}[width=5cm]{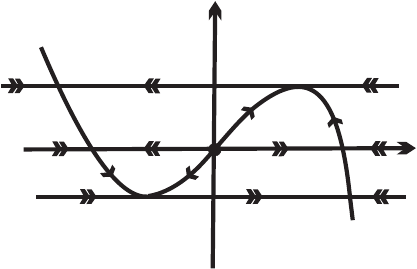}
\put(-2,55){$y=\phi(x)$}
\put(95,22){$x$}
\put(45,60){$y$}
\put(3,5){$(a)$}
\end{overpic}\qquad \quad
\begin{overpic}[width=5cm]{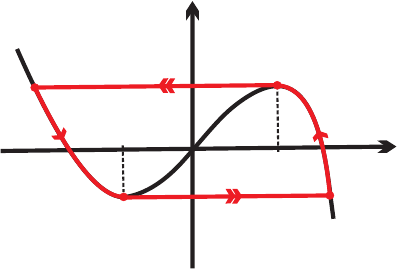}
\put(-1,42){$D'$}
\put(85,17){$C'$}
\put(67,48){$D$}
\put(28,10){$C$}
\put(54,48){$\textcolor{red}{\Gamma}$}
\put(-7,57){$y=\phi(x)$}
\put(28,33){$\ov x_m$}
\put(67,24){$\ov x_M$}
\put(95,23){$x$}
\put(42,62){$y$}
\put(3,5){$(b)$}
\end{overpic}
\caption{\footnotesize Assuming the hypotheses \textbf{(C1)}--\textbf{(C4)}, panel (a) illustrates the phase portraits of the reduced and layer problems \eqref{SlowFast3} and \eqref{SlowFast4}. Note that double and single arrows indicate the direction of the fast and slow flows, respectively. Panel (b) illustrates the singular trajectory $\Gamma$.}
\label{fig1}
\end{figure}

Let be $C = (\ov x_m,\phi(\ov x_m))$ and $D = (\ov x_M,\phi(\ov x_M))$. Let  $C'$ and $D'$ be the points of intersection of the straight lines $y = \phi(\ov x_m)$ and $y = \phi(\ov x_M)$ with $\mathcal{S}_r$ and $\mathcal{S}_l$, respectively. Consider $\Gamma$ the singular trajectory defined as the union of the fast fibers joining $C$ to $C'$ and $D$ to $D'$ and of the two pieces of the critical manifold $\mathcal{S}$ joining $C'$ to $D$ and $D'$ to $C$. See Figure \ref{fig1}(b). Let $V$ be a small tubular neighborhood of $\Gamma$. Then, under the assumptions \textbf{(C1)}--\textbf{(C4)}, for $\mu$ small enough, system \eqref{SlowFast1} admits a unique stable limit cycle $\Gamma_{\mu} \subset V$ which converges to the singular trajectory $\Gamma$ in the Hausdorff distance as $\mu \to 0$ (see \cite{KruSzm2001,MR80,Pon57}).

Next result states that, under the conditions {\bf (A1)} and \textbf{(A2)}, the differential system \eqref{S1} has, after a change of variables, a unique stable limit cycle approaching to the singular trajectory $\Gamma_0$ as $\lambda \to \infty$. 

\begin{proposition}\label{PropRelaxationOscillation}
	Assume that the hypotheses {\bf (A1)} and \textbf{(A2)} are fulfilled. Then, there exists $\lambda _1>0$ such that the differential system \eqref{S1} admits a stable limit cycle $\Phi_1(\lambda)$ for every $\lambda \in(\lambda _1,+\infty)$. Moreover
	$$\displaystyle \lim_{\lambda \to \infty} d\big(\Phi_1(\lambda),P_{\lambda}^{-1}(\Gamma_0)\big)=0$$ 
	where $P_{\lambda }(x,y)=\left(x,F(x)+y/\lambda \right)$  is an one--parameter family of diffeomorphims, and $\Gamma_0$ is the singular trajectory defined in Section \ref{IntroductionSection}, see Figure \ref{fig2}(a).
\end{proposition}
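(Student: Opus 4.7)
The plan is to show that, after the diffeomorphism $P_{\la}$ and a time rescaling, system \eqref{S1} is conjugate to a slow--fast system of the form \eqref{SlowFast2} to which the Krupa--Szmolyan theorem (Theorem 2.1 of \cite{KruSzm2001}) applies. Writing $(X,Y)=P_{\la}(x,y)=(x,F(x)+y/\la)$ and differentiating, system \eqref{S1} becomes
\[
X'=\la\bigl(Y-F(X)\bigr),\qquad Y'=f(X)x'+y'/\la=-X/\la,
\]
where the cancellation between $f(x)y$ and $-\la f(x)y/\la$ is exactly what makes the change of variables effective. Setting $\mu=1/\la^{2}$ and introducing the fast time $\tau=\la t$ yields
\[
\frac{dX}{d\tau}=Y-F(X),\qquad \frac{dY}{d\tau}=-\mu X,
\]
which matches \eqref{SlowFast2} with $g(X,Y,\mu)=Y-F(X)$ and $h(X,Y,\mu)=-X$; thus $\la\to\infty$ is precisely the singular limit $\mu\to 0$.

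Next I would verify conditions \textbf{(C1)}--\textbf{(C4)} for this system. The critical manifold is $\mathcal{S}=\{Y=F(X)\}=\textrm{Gr}(F)$, and by \textbf{(A1)} the function $F$ has exactly two non-degenerate critical points---a maximum at $x_M$ and a minimum at $x_m$---yielding \textbf{(C1)}, together with \textbf{(C2)} because $f'(x_M),\,f'(x_m),\,x_M,\,x_m$ are all nonzero. Hypothesis \textbf{(A1)} also forces $f>0$ on $(-\infty,x_M)\cup(x_m,\infty)$ and $f<0$ on $(x_M,x_m)$, so $\partial g/\partial X=-f(X)$ has the sign pattern required by \textbf{(C3)}: outer branches attracting, middle branch repelling. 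Finally, the reduced flow $dY/ds=-X$ points toward the folds along each outer branch (since $x_M<0<x_m$) and has a repelling equilibrium at $X=0$ on the middle branch, establishing \textbf{(C4)}.

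Then I would identify the Krupa--Szmolyan singular trajectory with $\G_0$. Since the layer dynamics preserve $Y$, the fast jump off the fold $A=(x_M,F(x_M))$ travels along the horizontal line $Y=F(x_M)$; hypothesis \textbf{(A2)} guarantees that this line meets $\textrm{Gr}(F)$ at exactly one other point, and the monotonicity of $F$ on $(x_M,x_m)$ and on $(x_m,\infty)$ forces that point to be $A'=(x_2,F(x_M))\in\mathcal{S}_r$. The symmetric statement gives $B'\in\mathcal{S}_l$. Concatenating these jumps with the slow arcs $\widetilde{A'B}\subset\mathcal{S}_r$ and $\widetilde{B'A}\subset\mathcal{S}_l$, traversed in the direction prescribed by \textbf{(C4)}, reconstructs exactly $\G_0$. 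Theorem 2.1 of \cite{KruSzm2001} then produces a threshold $\mu_1>0$ and, for every $\mu\in(0,\mu_1)$, a unique stable limit cycle $\widetilde{\G}_{\mu}$ of the $(X,Y)$-system contained in a tubular neighborhood of $\G_0$ and satisfying $d_H(\widetilde{\G}_{\mu},\G_0)\to 0$ as $\mu\to 0$. Setting $\la_1=1/\sqrt{\mu_1}$ and $\Phi_1(\la)=P_{\la}^{-1}(\widetilde{\G}_{\mu})$ with $\mu=1/\la^{2}$ furnishes the desired limit cycle of \eqref{S1} for every $\la>\la_1$.

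The main technical obstacle will be transferring the Hausdorff convergence from the $(X,Y)$-picture back to the $(x,y)$-picture, since $P_{\la}^{-1}(X,Y)=(X,\la(Y-F(X)))$ has Lipschitz constant of order $\la$ on any fixed bounded set and a naive estimate therefore loses a factor of $\la$. This is compensated by the quantitative Krupa--Szmolyan estimate $d_H(\widetilde{\G}_{\mu},\G_0)=O(\mu^{2/3})$, which, together with $\mu=1/\la^{2}$, still leaves $d_H(\Phi_1(\la),P_{\la}^{-1}(\G_0))=O(\la^{-1/3})\to 0$, as claimed.
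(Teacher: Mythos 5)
Your proposal follows essentially the same route as the paper: conjugate \eqref{S1} to the slow--fast system $\mu\dot x = u - F(x)$, $\dot u = -x$ via $P_{\la}$ and the rescaling $\mu=1/\la^2$, verify \textbf{(C1)}--\textbf{(C4)} from \textbf{(A1)}, identify the Krupa--Szmolyan singular trajectory with $\G_0$ using \textbf{(A2)}, and pull the resulting relaxation oscillation back through $P_{\la}^{-1}$. The only substantive difference is that you explicitly address the $O(\la)$ Lipschitz constant of $P_{\la}^{-1}$ when transferring the Hausdorff convergence back to the $(x,y)$-plane, invoking the quantitative $O(\mu^{2/3})$ rate from \cite{KruSzm2001} --- a point the paper's proof passes over silently, so your version is, if anything, more complete.
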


Clearly, hypothesis {\bf (A3)} implies that $\Phi(\lambda )=\Phi_1(\lambda ),$ for every $\lambda \in(\lambda _1,+\infty)$. Nevertheless, assuming weaker hypotheses (continuity of $f$; $f(0)<0$;  $f(x)>0$ and $xF(x)>0$, for $|x|$ large enough), it is known (see Theorem 2.2 from \cite{V12}) that there exists $\hat \la>0$ such that system \eqref{S1} has a unique limit cycle for every $\la>\hat\la$. Consequently, as a trivial consequence of Proposition \ref{PropRelaxationOscillation} we obtain the following result.

\begin{corollary}\label{c1}
Assume that the hypotheses {\bf (A1)} and  \textbf{(A2)} are fulfilled. Then, $\Phi(\lambda )=\Phi_1(\lambda )$ for every $\lambda \in(\lambda _1,+\infty)$.
\end{corollary}

\begin{proof}[Proof of Proposition \ref{PropRelaxationOscillation}]
	Firstly, we transform system \eqref{S1} into a slow--fast system. To do this we first define a new independent variable $s$ setting $s := t/\lambda$. This leads to system $\dot{x} = \lambda y, \dot{y}= - \lambda x- \lambda^2 f(x) y,$
	where the dot means derivative with respect to $s$. After that, we apply the change of coordinates $(x,u) = P_{\lambda}(x,y) = (x,F(x)+y/\lambda)$, where $F$ is given in \eqref{F}. Then, we obtain the system $(1/\lambda^2)\dot{x}= u - F(x), \dot{u}= -x.$
	Finally, setting $\mu := 1/\lambda^2$ and considering $\lambda$ large enough, we get the following slow--fast singularly perturbed system with small perturbation parameter $\mu$:
	\begin{equation}\label{Eq2}
	\mu\dot{x} = u - F(x), \qquad \dot{u} = -x.
	\end{equation}
	Comparing system \eqref{Eq2} with the general form \eqref{SlowFast1}, we have that $g(x,u,\mu) = u - F(x)$ and $h(x,u,\mu) = -x$.
	The critical manifold is given by $\mathcal{S} = \{(x,u) \in \R^2 : u = F(x)\}$. From the hypothesis {\bf (A1)}, it follows that $F$ has precisely two critical points, $x_M$ and $x_m$, since $F'(x) = f(x)$ for all $x \in \R$, and $x_M$ and $x_m$ are the only zeros of $f$. Moreover, as $F''(x_M) = f'(x_M) < 0$ and $F''(x_m) = f'(x_m) > 0$, then $A = (x_M,F(x_M))$ and $B = (x_m,F(x_m))$ are maximum and minimum points of $F$, respectively, both non--degenerate. Also these two fold points are generic, since
	$$\dfrac{\partial^2 g}{\partial x^2}(A) = -F''(x_M) \neq 0, \quad \dfrac{\partial g}{\partial u}(A) = 1 \neq 0, \quad h(A) = -x_M \neq 0,$$ $$\dfrac{\partial^2 g}{\partial x^2}(B) = -F''(x_m) \neq 0, \quad \dfrac{\partial g}{\partial u}(B) = 1 \neq 0, \quad h(B) = -x_m \neq 0.$$
	Further, from the hypothesis {\bf (A1)}, we can conclude that $F'(x) < 0$ for $x \in (x_M, x_m)$, and $F'(x) > 0$ for $x \in (-\infty, x_M) \cup (x_m,+\infty)$. Therefore, for the layer problem given by $\dot{x} = u - F(x), \dot{u} = 0$, the branches $\mathcal{S}_l = \{(x,F(x)) : x < x_M\}$ and  $\mathcal{S}_r = \{(x,F(x)) : x > x_m\}$ are attracting while the branch $\mathcal{S}_m = \{(x,F(x)) : x_M < x < x_m\}$ is repelling. Relative to dynamics of the reduced problem, we have that the slow flow on $\mathcal{S}_l$ satisfies $\dot{u} > 0$ and on $\mathcal{S}_r$ it satisfies that $\dot{u} < 0$. On $\mathcal{S}_m$ the reduced problem has a repeller equilibrium at the point $(0,F(0))=(0,0)$. In short, from the hypothesis {\bf (A1)}, we have that the fast and slow dynamics are as illustrated in Figure \ref{fig2}(b).
	
	\begin{figure}
		\begin{overpic}[width=5cm]{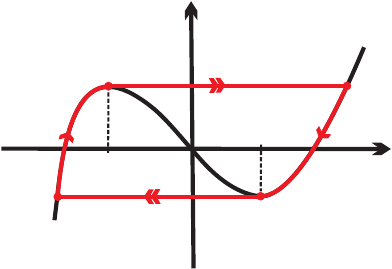}
			\put(25,50){$A$}
			\put(63,11){$B$}
			\put(91,45){$A'$}
			\put(6,16){$B'$}
			\put(65,50){$\textcolor{red}{\Gamma_0}$}
			\put(80,60){$y=F(x)$}
			\put(25,24){$x_M$}
			\put(64,34){$x_m$}
			\put(95,23){$x$}
			\put(42,62){$y$}
			\put(1,60){$(a)$}
		\end{overpic}\qquad \quad
		\begin{overpic}[width=5cm]{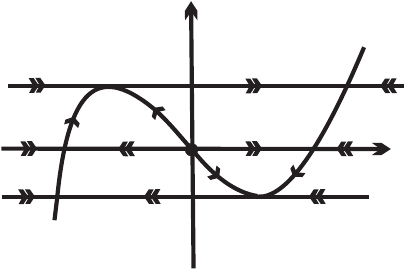}
			\put(76,58){$y=F(x)$}
			\put(92,23){$x$}
			\put(40,60){$y$}
			\put(1,60){$(b)$}
		\end{overpic}
		\caption{\footnotesize Panel (a) illustrates the singular trajectory $\Gamma_0$ and panel (b) illustrates the phase portraits of the layer and reduced problems associated with system \eqref{Eq2}. The double and single arrows indicate the direction of the fast and slow flows, respectively.}
		\label{fig2}
	\end{figure}
	
	We remark that the situation presented above involving system \eqref{Eq2} is not exactly the same as in prototypical situation described from the assumptions \textbf{(C1)}--\textbf{(C4)}. In this last case the critical manifold $\mathcal{S}$ is S--shaped while in our case $\mathcal{S}$ has the shape of a S but reflected. Mathematically speaking, the two situations coincide after a reflection $(X,Y)=(-x,y)$ on the $y$-axis.
	
	 Let $U$ be a small tubular neighborhood of $\Gamma_0$. Then, under the hypotheses {\bf (A1)} and \textbf{(A2)} it follows that, for $\mu$ small enough, system \eqref{Eq2} has a unique stable limit cycle $\Gamma_{\mu} \subset U$ which converges to the singular trajectory $\Gamma_0$ in the Hausdorff distance as $\mu \to 0$. Consequently, using the reverse change of coordinates, we can conclude that, for $\lambda > 0$ sufficiently large, there exists a unique stable limit cycle $\Phi_1(\lambda) \subset U$ for system \eqref{S1} such that
	 $$\displaystyle \lim_{\lambda \to \infty} d\big(\Phi_1(\lambda),P_{\lambda}^{-1}(\Gamma_0)\big)=0.$$
	This completes the proof of Proposition \ref{PropRelaxationOscillation}.
\end{proof}

\section{Asymptotic behavior of the limit cycle for small values of $\lambda $}

In this section, we shall use the averaging theory to study the behavior of the limit cycle $\Phi(\lambda )$ for small values of $\lambda >0$. For a general introduction on this subject we refer the book of Sanders, Verhulst and Murdock \cite{SVM}. The next theorem is a topological version of the classical Averaging Theorem to find periodic orbits for smooth differential systems.

\smallskip

\begin{theorem}[Averaging Theorem]\label{at} Consider the following nonautonomous differential equation
\begin{equation}\label{at:s1}
\dfrac{d r}{d\T}=\e F_1(\T,r)+\e^2 R(\T,r;\e),
\end{equation}
where, for $\ov r>0$ and $\e_0>0$ a small parameter, $F_1:\R\times(0,\ov r)\rightarrow\R$ and $R:\R\times(0,\ov r)\times(-\e_0,\e_0)\rightarrow\R$ are continuous functions, $2\pi$-periodic in the first variable, and locally Lipschitz in the second variable. We define the averaged function $M_1:(0,\ov r)\rightarrow \R$ as
\begin{equation}\label{at:f1}
M_1(r)=\int_0^{2\pi} F_1(\T,r)d\T.
\end{equation}
Assume that for some $\rho\in(0,\ov r)$ with $M_1(\rho)=0$, there exists a neighborhood $V$ of $\rho$ such that $M_1(r)\neq 0$ for all $r\in \ov V\setminus\{\rho\}$ and $d_B(M_1,V,0)\neq0$. Then, for $|\e|\neq0$ sufficiently small, there exists a $2\pi$-periodic solution $\f(\T;\e)$ of the differential equation \eqref{at:s1} such that $\f(0;\e)\to \rho$ when $\e\to 0$.
\end{theorem}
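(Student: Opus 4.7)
The plan is to reduce the existence of a $2\pi$-periodic solution of \eqref{at:s1} to the existence of a zero of the displacement map of the time-$2\pi$ Poincar\'e map, and then locate that zero via the Brouwer degree hypothesis on $M_1$.

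First I would denote by $r(\theta;r_0,\e)$ the solution of \eqref{at:s1} with $r(0;r_0,\e)=r_0$. Since $F_1$ and $R$ are continuous and locally Lipschitz in the second variable, for $|\e|$ small enough and $r_0\in\ov V$ the solution exists on $[0,2\pi]$ and depends continuously on $(r_0,\e)$. I would then introduce the displacement function
$$D_\e(r_0)=r(2\pi;r_0,\e)-r_0,$$
so that zeros of $D_\e$ in $V$ are in one-to-one correspondence with the initial values of $2\pi$-periodic solutions lying in $V$.

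Next I would establish the expansion $D_\e(r_0)=\e\,M_1(r_0)+\e^2 H(r_0,\e)$, with $H$ continuous and bounded on $\ov V\times[-\e_0,\e_0]$. Integrating \eqref{at:s1} from $0$ to $2\pi$ gives
$$D_\e(r_0)=\e\int_0^{2\pi}F_1(\T,r(\T;r_0,\e))\,d\T+\e^2\int_0^{2\pi}R(\T,r(\T;r_0,\e);\e)\,d\T.$$
Using the Lipschitz hypothesis and Gronwall's inequality one shows that $r(\T;r_0,\e)=r_0+O(\e)$ uniformly in $(\T,r_0)\in[0,2\pi]\times\ov V$; plugging this into the first integral and using continuity of $F_1$ together with compactness of $[0,2\pi]\times\ov V$ yields $\int_0^{2\pi}F_1(\T,r(\T;r_0,\e))d\T=M_1(r_0)+o(1)$ as $\e\to 0$, uniformly on $\ov V$, which is exactly the asymptotic expansion claimed, since the $\e^2$ remainder is controlled by the uniform bound on $R$.

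Finally I would conclude via the homotopy invariance of the Brouwer degree. From the expansion, the map $r_0\mapsto D_\e(r_0)/\e$ converges uniformly on $\ov V$ to $M_1$ as $\e\to 0$; since $M_1$ does not vanish on $\partial V$ by hypothesis, for $|\e|$ sufficiently small neither does $D_\e/\e$, and the straight-line homotopy $(\T,r_0)\mapsto(1-\T)M_1(r_0)+\T D_\e(r_0)/\e$ is admissible on $\partial V$. Homotopy invariance gives $d_B(D_\e,V,0)=d_B(M_1,V,0)\neq 0$, so the existence property of the degree furnishes some $r_0^\e\in V$ with $D_\e(r_0^\e)=0$. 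The corresponding solution $\f(\T;\e):=r(\T;r_0^\e,\e)$ is $2\pi$-periodic. To see that $\f(0;\e)=r_0^\e\to\rho$, I would argue by contradiction: if a subsequence $r_0^{\e_n}$ converged to some $r^*\in\ov V\setminus\{\rho\}$, then dividing $D_{\e_n}(r_0^{\e_n})=0$ by $\e_n$ and letting $n\to\infty$ would force $M_1(r^*)=0$, contradicting the assumption that $\rho$ is the unique zero of $M_1$ in $\ov V$.

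I expect the main technical obstacle to be the uniform estimate $r(\T;r_0,\e)=r_0+O(\e)$ on $[0,2\pi]\times\ov V$ under the mere assumption of continuity (not $C^1$) of $F_1$ and $R$; the local Lipschitz hypothesis is essential here to close a Gronwall argument without appealing to smooth dependence on initial conditions, and care is needed to cover $\ov V$ by finitely many Lipschitz neighborhoods so that the estimate is uniform.
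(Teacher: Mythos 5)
Your proposal is correct and follows exactly the strategy the paper itself indicates for Theorem \ref{at}: the expansion \eqref{poincare} of the Poincar\'e (displacement) map, with the local Lipschitz hypothesis closing the uniform estimate, followed by homotopy invariance of the Brouwer degree and a compactness argument to get $\f(0;\e)\to\rho$. The paper does not reprove the theorem but cites \cite{BL} and \cite{LNT}, whose argument is the one you reconstructed, so there is nothing substantive to add beyond the cosmetic clash of using $\T$ both as time and as homotopy parameter.
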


\smallskip

The above theorem was proved in \cite{BL} and then generalized in \cite{LNT}. The function $d_B(M_1,V,0)$ denotes the Brouwer degree of the function $M_1$ with respect to the domain $V$ and the value $0$ (see \cite{B} for a general definition).  When $M_1$ is a $C^1$--function and the Jacobian determinant of  $M_1$ at
$r\in V$ is distinct from zero (we denote $J_{M_1}(r)\neq 0)$  then the Brouwer degree of $M_1$ at $0$ is given by
\begin{equation}\label{defbr}
d_B(M_1,V,0)=\sum_{r\in\mathbf{Z}_{M_1}}\mathrm{sign}\left(J_{M_1}(r)\right),
\end{equation}
where $\mathbf{Z}_{M_1}=\lbrace r\in V :M_1(r)=0\rbrace$. In this case $J_{M_1}(\rho)\neq0$ implies $\displaystyle d_B\left(
M_1,V,0\right)=1$ for some small neighborhood $V$ of $\rho$. 

The main property of the Brouwer degree we shall use in this section is the {\it invariance under homotopy} (see \cite{B}), which says:

\smallskip

\noindent{\bf Invariance under homotopy.} {\it Let $M_s(r)$ be an homotopy between $M_0$ and $M_1$ for $s\in[0,1]$. If $0\notin M_s(\p V)$ for every $s\in[0,1]$, then $d_B(M_s,V,0)$ is constant in $s$.}

\smallskip

The proof of  Theorem \ref{at} (Averaging Theorem) is based on the fact that the {\it Poincar\'{e} map} $\Pi:\Sigma\rightarrow\Sigma$ of the nonautonomous differential equation \eqref{at:s1} defined on the {\it Poincar\'{e} section} $\Sigma=\{(\T,r)\in\R\times (0,\ov r):\,\T=0\}$ reads
\begin{equation}\label{poincare}
\Pi(r;\e)=r+\e M_1(r)+\CO(\e^2).
\end{equation}
Let $\phi(\theta;\e)$ be a family of periodic solutions of the differential equation \eqref{at:s1}. Thus, $r(\e)=\phi(0;\e)$ is a branch of fixed points of the Poincar\'{e} map $\Pi(r;\e)$. From \eqref{poincare} we have that $M_1(r(0))=0$, that is the branch $r(\e)$ approaches to the set of zeros of $M_1$. The {\it degree theory} allows us to provide sufficient conditions for which the conversely is true, assuring when a zero $\rho$ of $M_1(r)$ will persist as branch of fixed points $r(\e)$ of $\Pi(r;\e)$, that is $\Pi(r(\e);\e)=r(\e)$ and $r(0)=\rho$. 

The stability of a periodic solution $\phi(t;\e)$ associated with a branch of fixed points $r(\e)=\phi(0;\e)$ of the Poincar\'{e} map $\Pi(r;\e)$ can be studied via the  {\it displacement function}, which is defined as $\Delta(r;\e)=\Pi(r;\e)-r$. For a fixed $\e>0$ small enough the periodic solution $\phi(r;\e)$ is
\begin{itemize}
\item[(i)] unstable (or repelling) if there exists a small neighborhood $I=(a,b)$ of $r(\e)$ such that $\Delta(r;\e)>0$ for every $r\in(r(\e),b)$ and $\Delta(r;\e)<0$ for every $r\in(a,r(\e))$;
\item[(ii)] stable (or attracting)  if there exists a small neighborhood $I=(a,b)$ of $r(\e)$ such that $\Delta(r;\e)<0$ for every $r\in(r(\e),b)$ and $\Delta(r;\e)>0$ for every $r\in(a,r(\e))$.
\end{itemize}
We shall see that the expression \eqref{poincare} also help us to study the stability of a periodic solution given by Theorem \ref{at}.

Next result states that, under the conditions {\bf (A1)},  \textbf{(A2)}, and \textbf{(A3)}, the differential system \eqref{S1} has a unique stable limit cycle approaching to the circle $S_{\rho}$ as $\lambda \to 0$. Before stating it we recall the definitions of $x^*$ and $r^*$:
\[
x^*=\min\{-x_1^*,x_2^*\},\quad\text{and}\quad r^*=\dfrac{\big(x_1^2+x_2^2\big)\big(F(x_M)-F(x_m)\big)}{x_1F(x_m)+x_2F(x_M)}>x^*,
\]
where $x_1<0<x_2$ are the abscissas of $A'$ and $B'$, respectively,  and $x_1^*<0<x_2^*$ are the unique zeros of $F$ distinct from zero. As mentioned before $x_1<x_1^*<x_M$ and $x_m<x_2^*<x_2$.
\begin{proposition}\label{av:prop}
Assume that the hypotheses {\bf (A1)},  \textbf{(A2)}, and {\bf (A3)} are fulfilled. Then, there exists $\lambda _0>0$ such that the differential system \eqref{S1} admits a stable limit cycle $\Phi_0(\lambda )$ for every $\lambda \in(0,\lambda _0)$. Moreover, there exists $\rho\in(x^*,r^*)$ such that 
\begin{equation}\label{limit2}
\displaystyle \lim_{\lambda \to 0}d\big(\Phi_0(\lambda ),S_{\rho}\big)=0,
\end{equation}
being $\rho$ the unique zero of the function $\ov F(r)$ defined in \eqref{Fbar}.
\end{proposition}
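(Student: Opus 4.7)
The natural strategy is to recast \eqref{S1} in slow form around the harmonic oscillator and invoke Theorem~\ref{at}. Under the polar change of variables $x=r\cos\theta$, $y=-r\sin\theta$, a direct computation yields $r'=-\lambda rf(r\cos\theta)\sin^2\theta$ and $\theta'=1-\lambda f(r\cos\theta)\sin\theta\cos\theta$, so that
\[
\frac{dr}{d\theta}=-\lambda\,rf(r\cos\theta)\sin^2\theta+\mathcal{O}(\lambda^2),
\]
which matches \eqref{at:s1} with $\varepsilon=\lambda$. Splitting $[0,2\pi]$ at $\pi$ and substituting $s=\cos\theta$ on each half gives the averaged function
\[
M_1(r)=-r\int_0^{2\pi}f(r\cos\theta)\sin^2\theta\,d\theta=-2r\,\ov F(r),
\]
so its positive zeros coincide with those of $\ov F$.

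The central step is to localize a zero of $\ov F$ inside $(x^*,r^*)$. Two equivalent representations, via the substitution $t=rs$ and one integration by parts, are
\[
r^2\,\ov F(r)=\int_{-r}^r f(t)\sqrt{r^2-t^2}\,dt=\int_{-r}^r\frac{tF(t)}{\sqrt{r^2-t^2}}\,dt.
\]
For the \emph{lower} endpoint, hypotheses \textbf{(A1)}--\textbf{(A2)} yield $F(x_1^*)=F(0)=F(x_2^*)=0$ with $F>0$ on $(x_1^*,0)$ and $F<0$ on $(0,x_2^*)$, so $tF(t)\leq 0$ on $[x_1^*,x_2^*]$ (strictly, away from the three zeros) and $tF(t)>0$ outside. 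Since $x^*=\min\{-x_1^*,x_2^*\}$, for $r\in(0,x^*]$ the interval $[-r,r]$ lies in $[x_1^*,x_2^*]$ and the second representation forces $\ov F(r)<0$. For the \emph{upper} endpoint, split the first representation at $x_1<x_M<x_m<x_2$: on $[x_1,x_M]$ and $[x_m,x_2]$ one has $f\geq 0$ with $\int f\,dt=F(x_M)-F(x_m)$ (using $F(x_1)=F(x_m)$ and $F(x_2)=F(x_M)$), while on $[x_M,x_m]$ one has $f\leq 0$ with $\int f\,dt=-(F(x_M)-F(x_m))$. Combining the pointwise estimate $\sqrt{r^2-t^2}\geq\sqrt{r^2-x_i^2}$ on each outer interval with a weighted upper bound on the middle interval that separates its halves $[x_M,0]$ and $[0,x_m]$ (contributing the weights $F(x_M)$ and $|F(x_m)|$ respectively), one derives a lower bound on $r^2\,\ov F(r)$ whose positivity is equivalent to $r\geq r^*$, with $r^*$ exactly as in \eqref{xr}. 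Continuity then supplies at least one zero $\rho\in(x^*,r^*)$.

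It remains to produce the limit cycle and its properties. Since $\ov F$ changes sign at $\rho$, so does $M_1(r)=-2r\,\ov F(r)$, giving $d_B(M_1,V,0)=\pm 1$ for a small neighborhood $V$ of $\rho$. Theorem~\ref{at} then produces a $2\pi$-periodic solution $\phi(\theta;\lambda)$ of the averaged equation with $\phi(0;\lambda)\to\rho$ as $\lambda\to 0$. Reversing the polar substitution delivers a limit cycle $\Phi_0(\lambda)$ of \eqref{S1} with $d_H(\Phi_0(\lambda),S_\rho)\to 0$. Stability follows from the expansion $\Delta(r;\lambda)=\lambda M_1(r)+\mathcal{O}(\lambda^2)$: with $M_1>0$ just below $\rho$ and $M_1<0$ just above $\rho$, the displacement has the attracting sign pattern for $\lambda>0$ small. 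Uniqueness of $\rho$ as a positive zero of $\ov F$ follows from hypothesis \textbf{(A3)}: any additional sign-change zero would, by the same averaging argument, generate a second limit cycle for $\lambda$ small, contradicting (A3). The main technical obstacle is extracting the \emph{sharp} algebraic constant $r^*$ from the integration-by-parts identity; a crude estimate gives only a weaker sufficient condition such as $r>\sqrt{x_1^2+x_2^2}$, and recovering the exact denominator $x_1F(x_m)+x_2F(x_M)$ requires the weighted accounting of the contributions of the four points $A,A',B,B'$ described above.
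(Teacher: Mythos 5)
Your overall architecture matches the paper's: polar coordinates, the averaged function $M_1(r)=-2r\,\ov F(r)$, positivity of $M_1$ on $(0,x^*]$ from the sign of $xF(x)$ on $(x_1^*,x_2^*)$, Brouwer degree $-1$ at a sign-changing zero, stability via the displacement function, and uniqueness of $\rho$ from \textbf{(A3)}. The genuine gap is the upper endpoint: you never actually prove that $\ov F$ has a zero below $r^*$. You sketch a decomposition of $r^2\,\ov F(r)=\int_{-r}^{r}f(t)\sqrt{r^2-t^2}\,dt$ at $x_1<x_M<x_m<x_2$ and assert that a ``weighted accounting'' of the middle interval recovers \emph{exactly} the constant $r^*$ of \eqref{xr}, while simultaneously flagging this as the main unresolved technical obstacle. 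That assertion is unsubstantiated and, as described, unlikely to work: splitting $[x_M,x_m]$ at $0$ and bounding $\sqrt{r^2-t^2}\leq r$ on each half returns the same crude estimate as not splitting, and nothing in your scheme produces the denominator $x_1F(x_m)+x_2F(x_M)$. The paper gets $r^*$ by an entirely different mechanism: it works with the representation $M_1(r)=-\tfrac{2}{r}\int_{-r}^{r}\tfrac{xF(x)}{\sqrt{r^2-x^2}}dx$, bounds $xF(x)$ below by the \emph{constants} $x_1F(x_m)$, $x_1F(x_M)$, $x_2F(x_m)$, $x_2F(x_M)$ on the four intervals $[-r,x_1]$, $[x_1,0]$, $[0,x_2]$, $[x_2,r]$ (properties \textbf{(p$_1$)}--\textbf{(p$_4$)}), integrates $1/\sqrt{r^2-x^2}$ explicitly to get arctangents, and then controls those with the dedicated inequality $\arctan\big(a/\sqrt{r^2-a^2}\big)\leq \pi a/(2r)$ of Lemma \ref{av:lemma}. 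The factor $\pi$ and the exact denominator come from that route; your proposal contains no substitute for it, so the claim $\rho\in(x^*,r^*)$ is not established.

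Ironically, the ``crude'' bound you dismiss would have sufficed had you pushed it through. Your decomposition gives $r^2\,\ov F(r)\geq \big(F(x_M)-F(x_m)\big)\big(\sqrt{r^2-x_1^2}+\sqrt{r^2-x_2^2}-r\big)>0$ for $r>\sqrt{x_1^2+x_2^2}$, hence a zero of $\ov F$ in $\big(x^*,\sqrt{x_1^2+x_2^2}\,\big]$; and by Cauchy--Schwarz, $x_1F(x_m)+x_2F(x_M)<\sqrt{x_1^2+x_2^2}\,\big(F(x_M)-F(x_m)\big)$, so $\sqrt{x_1^2+x_2^2}<r^*$ and the zero automatically lies in $(x^*,r^*)$. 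To repair the proof you should either carry out that elementary argument explicitly (a genuinely different and arguably simpler route than the paper's) or reproduce the paper's arctangent estimate; as written, the pivotal inequality is only asserted.
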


As a trivial consequence of Proposition \ref{av:prop} and hypothesis {\bf (A3)} we obtain the following result.

\begin{corollary}\label{c2}
Assume that the hypotheses {\bf (A1)},  \textbf{(A2)}, and {\bf (A3)} are fulfilled. Then, $\Phi(\lambda )=\Phi_0(\lambda )$ for every $\lambda \in(0,\lambda _0)$.
\end{corollary}

In order to prove Proposition \ref{av:prop} we shall need the next lemma.
\begin{lemma}\label{av:lemma}
The following inequality holds for every $u>1$:
\begin{equation}\label{ine2}
\dfrac{1}{u}<\arctan\left(\dfrac{1}{\sqrt{u^2-1}}\right)<\dfrac{\pi}{2 u}.
\end{equation}
\end{lemma}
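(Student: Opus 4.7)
The cleanest route is to eliminate the square root by a trigonometric substitution and reduce the double inequality to a classical one. The plan is to set $u=1/\sin\alpha$ with $\alpha\in(0,\pi/2)$, which is a bijection between $\alpha\in(0,\pi/2)$ and $u\in(1,\infty)$. A direct computation gives $\sqrt{u^2-1}=\cos\alpha/\sin\alpha$, so $1/\sqrt{u^2-1}=\tan\alpha$, and hence
\[
\arctan\!\left(\frac{1}{\sqrt{u^2-1}}\right)=\alpha.
\]
With this substitution, $1/u=\sin\alpha$ and $\pi/(2u)=(\pi/2)\sin\alpha$, so the desired inequality \eqref{ine2} is equivalent to
\[
\sin\alpha<\alpha<\frac{\pi}{2}\sin\alpha,\qquad \alpha\in(0,\pi/2).
\]

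The left-hand inequality $\sin\alpha<\alpha$ on $(0,\pi/2)$ is the classical estimate, obtained for instance by observing that $\psi(\alpha)=\alpha-\sin\alpha$ satisfies $\psi(0)=0$ and $\psi'(\alpha)=1-\cos\alpha>0$ on $(0,\pi/2)$. For the right-hand inequality $\alpha<(\pi/2)\sin\alpha$, equivalently $\sin\alpha/\alpha>2/\pi$, I would invoke Jordan's inequality: the function $\chi(\alpha):=\sin\alpha/\alpha$ is strictly decreasing on $(0,\pi/2]$ (which follows from checking that $\chi'(\alpha)=(\alpha\cos\alpha-\sin\alpha)/\alpha^2<0$, since $\alpha\cos\alpha-\sin\alpha$ vanishes at $0$ and has negative derivative $-\alpha\sin\alpha<0$). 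Hence $\chi(\alpha)>\chi(\pi/2)=2/\pi$ for every $\alpha\in(0,\pi/2)$, which is exactly what is needed.

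Combining the two estimates yields $\sin\alpha<\alpha<(\pi/2)\sin\alpha$ on the open interval $(0,\pi/2)$, which via the substitution translates back to \eqref{ine2} for every $u>1$. No step is genuinely difficult; the only thing to get right is the monotonicity argument underlying Jordan's inequality, which I would record explicitly since it is the key ingredient of the upper bound.
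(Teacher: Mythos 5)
Your proof is correct, but it takes a genuinely different route from the paper's. You substitute $u=1/\sin\alpha$, which turns $\arctan(1/\sqrt{u^2-1})$ into $\alpha$ and reduces the double inequality to the classical Jordan inequality $\sin\alpha<\alpha<\frac{\pi}{2}\sin\alpha$ on $(0,\pi/2)$; you then prove both halves by standard monotonicity arguments ($\alpha-\sin\alpha$ increasing from $0$, and $\sin\alpha/\alpha$ strictly decreasing down to $2/\pi$). The paper instead works directly with the two difference functions $\beta_1(u)=\frac{1}{u}-\arctan(1/\sqrt{u^2-1})$ and $\beta_2(u)=\arctan(1/\sqrt{u^2-1})-\frac{\pi}{2u}$: it computes their limits as $u\to1^+$ and $u\to+\infty$, computes $\beta_1'$ and $\beta_2'$ explicitly, and rules out interior zeros by counting critical points (for $\beta_1$ the derivative is everywhere positive; for $\beta_2$ there is a single critical point, which together with the two vanishing endpoint limits and the sample value $\beta_2(2)<0$ forces $\beta_2<0$ throughout). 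Your approach buys conceptual clarity — it exposes the inequality as a disguise of Jordan's inequality and avoids computing derivatives of arctangent compositions — while the paper's is a self-contained calculus verification that does not rely on recognizing the substitution. All the steps you record (the bijectivity of $\alpha\mapsto1/\sin\alpha$ between $(0,\pi/2)$ and $(1,\infty)$, the identity $\arctan(\tan\alpha)=\alpha$ on that range, and the sign analysis of $\alpha\cos\alpha-\sin\alpha$) check out, so there is no gap.
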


\begin{proof}
First we note that the inequality \eqref{ine2} holds for $u=2$, since $1/2<\pi/6<\pi/4$. Now consider the functions
\begin{equation}\label{beta}
\begin{array}{l}
\beta_1(u)=\dfrac{1}{u}-\arctan\left(\dfrac{1}{\sqrt{u^2-1}}\right),\vspace{0.2cm}\\

\beta_2(u)=\arctan\left(\dfrac{1}{\sqrt{u^2-1}}\right)-\dfrac{\pi}{2 u}.
\end{array}
\end{equation}
The following properties hold:
\begin{equation}\label{lbeta1}
\lim_{u\to 1}\beta_1(u)=1-\dfrac{\pi}{2}<0,\quad \lim_{u\to +\infty}\beta_1(u)=0, 
\end{equation}
\begin{equation}\label{lbeta2}
\lim_{u\to 1}\beta_2(u)= \lim_{u\to +\infty}\beta_2(u)=0. 
\end{equation}
Computing the derivatives of the functions $\beta_1$ and $\beta_2$ given in \eqref{beta} we get
\begin{equation}\label{dbeta}
\beta_1'(u)=\dfrac{u-\sqrt{u^2-1}}{u^2\sqrt{u^2-1}},
\quad
\beta_2'(u)=\dfrac{\pi\sqrt{u^2-1}-2u}{2u^2\sqrt{u^2-1}}.
\end{equation}

From the limiting values \eqref{lbeta1}, if the function $\beta_1$ has a zero $u^*>1$ then it must have at least one critical point for $u>1$. However from \eqref{dbeta} we know that $\beta_1'(u)>0$ for every $u>1$. Hence, we conclude that $\beta_1(u)<0$ for every $u>1$, which leads to the first inequality of \eqref{ine2}.

From the limiting values \eqref{lbeta2}, if the function $\beta_2$ has a zero $u^*>1$ then it must have at least two critical points for $u>1$. However from \eqref{dbeta} we know that $\beta_2'$ vanishes only for $u=\pi/\sqrt{\pi^2-4}>1$. Since $\beta_2(2)<0$ we conclude that $\beta_2(u)<0$ for every $u>1$, which leads to the second inequality of \eqref{ine2}. This completes the proof of Lemma \ref{av:lemma}. 
\end{proof}

\begin{proof}[Proof of Proposition \ref{av:prop}]
In order to write the differential system \eqref{S1} in the standard form \eqref{at:s1} of the Averaging Theorem, 
we transform it through the coordinates changing $x=r \cos\T$ and $y=-r \sin\T$. The transformed system reads
\begin{equation}
r'=-\lambda\,r\, f(r\cos\T)\sin^2\T ,\quad \T'=1-\lambda f(r\cos\T)\sin\T\cos\T.
\end{equation}
We note that $\T'>0$ for $\lambda >0$ sufficiently small. Hence, we can take the angle $\T$ as the new time variable, that is
\begin{equation}\label{polar}
\dfrac{d r}{d\T}=\dfrac{-\lambda\,r\,f(r\cos\T)\sin^2\T}{1-\lambda\,f(r\cos\T)\sin\T\cos\T}=-\lambda \,r\, f(r \cos\T)\sin^2\T+\lambda ^2 R(\T,r;\lambda ).
\end{equation}
Therefore, the differential system \eqref{S1} is equivalent to the differential equation \eqref{polar} which is written in the standard form \eqref{at:s1} with $\e=\lambda$. Moreover, since $f$ is continuous, $F$ is differentiable and then the right handside of the differential equation \eqref{polar} is locally Lipschitz in the variable $r$. 

Computing the averaged function \eqref{at:f1} for the differential equation \eqref{polar} we obtain
\begin{equation}\label{prom1}
M_1(r)= -r\displaystyle\int_0^{2\pi}      f(r \cos\T)\sin^2\T \,d\T= -2 r\ov F(r).
\end{equation}
To get the second above equality firstly we use the change of variable $x=r\cos\T$, restricted to the domains $[0,\pi/2]$, $[\pi/2,\pi]$, $[\pi,3\pi/2]$, and $[3\pi/2,2\pi]$, and then we take $x=r\,s$.  Equivalent formulae for the averaged function \eqref{prom1} are given by
\begin{equation}\label{prom}
M_1(r)=\!-\displaystyle\int_0^{2\pi}\!\!\!\!\cos\T\, F(r \cos\T)d\T
=-\dfrac{2}{r}\displaystyle\int_{-r}^{r}\!\!\dfrac{x}{\sqrt{r^2-x^2}}F(x)dx.
\end{equation}
The first above equality can be checked using integration by parts. The second one is also obtained using the change of variable $x=r\cos\T$ restricted to the domains $[0,\pi/2]$, $[\pi/2,\pi]$, $[\pi,3\pi/2]$, and $[3\pi/2,2\pi].$ Throughout this proof the last equality of \eqref{prom} will be more conveniently for our purposes.

Since $x F(x)\leq0$ for $x\in(x_1^*,x_2^*)$ and $\sqrt{r^2-x^2}>0$ for every $x\in(-r,r)$, we obtain a first estimative for the averaged function \eqref{prom}:  $M_1(r)>0$ for every $r\in[0,x^*]$.

From the hypotheses {\bf (A1)} and {\bf (A2)} we have the following properties:
\begin{itemize}
\item[{\bf(p$_1$)}] $xF(x)>x_1 F(x_m)$ for every $x<x_1$;
\item[{\bf(p$_2$)}] $xF(x)>x_1 F(x_M)$ for every $x_1<x<0$;
\item[{\bf(p$_3$)}] $xF(x)>x_2 F(x_m)$ for every $0<x<x_2$;
\item[{\bf(p$_4$)}] $xF(x)>x_2 F(x_M)$ for every $x>x_2$.
\end{itemize}

To obtain {\bf (p$_1$)} we note that $F(x)<F(x_m)$ for $x<x_1<0$, therefore  $x F(x)>xF(x_m)$. Moreover, $F(x_m)<0$ and $x<x_1$ imply that $x F(x_m)>x_1 F(x_m)$, which leads to {\bf (p$_1$)}. The other properties follow using similar arguments.

If $r>\max\{-x_1,x_2\}$ the last integral in \eqref{prom} can be split in the domains $I_1=[-r,x_1]$, $I_2=[x_1,0]$, $I_3=[0,x_2]$, and $I_4=[x_2,r]$. 
The property {\bf (p$_i$)} can be used to estimate the integral \eqref{prom} restricted to $I_i$, $i=1,2,3,4$. For instance, using {\bf (p$_{1}$)} on the domain $I_1$ we get
\[
\begin{array}{rl}
\displaystyle\int_{-r}^{x_1}\dfrac{x}{\sqrt{r^2-x^2}}F(x)dx>&\displaystyle x_1 F(x_m)\int_{-r}^{x_1}\dfrac{1}{\sqrt{r^2-x^2}}dx\vspace{0.2cm}\\
=&\dfrac{x_1 F(x_m)}{2}\left(\pi+2\arctan\left(\dfrac{x_1}{\sqrt{r^2-x_1^2}}\right)\right).
\end{array}
\]
Doing the same for $i=2,3,4$ we get the following inequality
\begin{equation}\label{1M1}
\begin{array}{rl}
\displaystyle M_1(r)\leq&-\dfrac{\pi\big(x_1F(x_m)+x_2 F(x_M)\big)}{r}\vspace{0.2cm}\\
&+\dfrac{2\,x_1\big(F(x_m)-F(x_M)\big)}{r}\arctan\left(\dfrac{-x_1}{\sqrt{r^2-x_1^2}}\right)\vspace{0.2cm}\\
&-\dfrac{2\,x_2\big(F(x_m)-F(x_M)\big)}{r}\arctan\left(\dfrac{x_2}{\sqrt{r^2-x_2^2}}\right).
\end{array}
\end{equation}
From Lemma \ref{av:lemma} we know that the following inequality holds for $0<a<r$:
\begin{equation}\label{ine}
\arctan\left(\dfrac{a}{\sqrt{r^2-a^2}}\right)\leq\dfrac{\pi a}{2 r}.
\end{equation}
Note that, in \eqref{1M1}, the coefficients of the arctangents are all positive. Thus, applying \eqref{ine} into \eqref{1M1} we obtain the following second estimative for the averaged function \eqref{prom}:
\[
\begin{array}{rl}
\displaystyle M_1(r)\leq\dfrac{\pi}{r^2}\bigg[\big(x_1^2+x_2^2\big)\big(F(x_M)-F(x_m)\big)-r\big(x_1F(x_m)+x_2F(x_M)\big)\bigg].
\end{array}
\]
Let $r^*$ be the zero of the righthand side of the above inequality, that is
\[
r^*=\dfrac{\big(x_1^2+x_2^2\big)\big(F(x_M)-F(x_m)\big)}{x_1F(x_m)+x_2F(x_M)}>x^*.
\]
Since $x_1F(x_m)+x_2F(x_M)>0$ it follows that $M_1(r)<0$ for every $r>r^*$. Hence, we conclude that there exists $\rho\in(x^*,r^*)$ such that $M_1(\rho)=-2 \rho \ov F(\rho)=0$. Moreover, $M_1(r)<0$ for $r>\rho$ and  $M_1(r)>0$ for $0<r<\rho$, and therefore we conclude that $\rho$ is the unique zero of $M_1$ and consequently of  $\ov F$.

Now consider the homotopy $M_s(r)=(1-s)(\rho-r)+s M_1(r),$ $s\in[0,1],$ between $M_0(r)=\rho-r$ and $M_1(r)$. Clearly, for every $s\in[0,1],$ $M_s(\rho)=0$, $M_s(r)<0$ for $r>\rho$ and $M_s(r)>0$ for $r<\rho$. From \eqref{defbr} it is easy to see that $d_B(M_0,V,0)=-1$. Therefore, from the {\it invariance under homotopy} property we conclude that $d_B(M_1,V,0)=-1$.

Applying the Averaging Theorem we conclude that, for $\lambda >0$ sufficiently small, the differential equation \eqref{polar} has a periodic solution $\phi(\T;\lambda )$ such that $\phi(0;\lambda )\to\rho$ when $\lambda \to 0$. Since the solutions of the differential equation \eqref{polar} for $\lambda =0$ are constant in the variable $\T$ we conclude that $\phi(\T;\lambda )\to\rho$ when $\lambda \to 0$ for every $\T\in[0,2\pi].$  Consequently, for $\lambda >0$ sufficiently small, the closed curve 
\[
\Phi_0(\lambda )=\Big\{\big(\phi(\T;\lambda )\cos\T,-\phi(\T;\lambda )\sin\T\Big):\, \T\in[0,2\pi]\Big\}
\]
describes a limit cycle of system \eqref{S1} such that $\lim_{\lambda \to 0}d\big(\Phi_0(\lambda),S_{\rho}\big)$ $=0$. 

Let $r(\lambda )=\phi(0;\lambda )$ be the branch of fixed points of the Poincar\'{e} map $\Pi(r;\lambda )$. From the hypothesis {\bf (A3)} this branch is unique. Therefore, from \eqref{poincare}
\[
\Delta(r;\lambda )= \Pi(r;\lambda )-r=\lambda M_1(r)+\CO(\lambda^2),
\]
and then, for $\lambda >0$ sufficiently small, $\Delta(r;\lambda )<0$ for every $r>r(\lambda )$ and $\Delta(r;\lambda )>0$ for every $0<r<r(\lambda )$. This implies that the periodic solution $\phi(\T;\lambda )$ and, consequently, the limit cycle $\Phi_0(\lambda )$ is stable.
\end{proof}

\section{Limit cycle amplitude growth}

We observe that, from item (iii) of Theorem \ref{Thm1}, the amplitude of the limit cycle $\Phi(\la)$ tends to $\rho$ when $\la$ goes to $0$, and from item (ii) of Theorem \ref{Thm1}, the amplitude of $\Phi(\la)$ is unbounded for $\la>0$. In this section, we aim to investigate the amplitude growth of $\Phi(\la)$.  To do that we build a region $R_{\la}$ having an increasing diameter such that $\Phi(\la)\subset R_{\la}$, for every $\la>0$ (see Figure \ref{Rlambda}). Before that we provide the proof of item (iv) of Theorem \ref{Thm1}.  

\begin{proof}[Proof of item (iv) of Theorem \ref{Thm1}]
Firstly, consider the functions 
\[
\xi^-(\la)=\min\big(\pi_x\Phi(\la)\big)<0\quad \text{and}\quad \xi^+(\la)=\max\big(\pi_x\Phi(\la)\big)>0,
\]
where $\pi_x$ denotes the projection onto the axis $x$. Item (i) of Theorem \ref{Thm1} implies that $\xi^{\pm}(\la)$ are continuous for every $\la>0$. Furthermore, items (ii) and (iii) of Theorem \ref{Thm1} imply
\[
\lim_{\la\to0}\xi^{\pm}(\la)=\pm\rho, \quad \lim_{\la\to\infty}\xi^{-}(\la)=x_1, \quad \text{and}\quad \lim_{\la\to\infty}\xi^{+}(\la)=x_2.
\]
Hence, $\xi^{\pm}$ are bounded functions for $\la>0$, and $x_0=\sup\{|\xi^{\pm}(\la)|:\,\la>0\}>\max\{\rho, -x_1,x_2\}$. Therefore, the limit cycle $\Phi(\la)$ is contained in the strip $\{(x,y)\in\R^2:\,-x_0<x<x_0\}$ for every $\la>0$.

Now we prove that $\Phi(\la)$ intersects one of the straight lines $x=x_M$ or $x=x_m$. Note that the divergent of the vector field $X_{\la}(x,y)=(y,-x-\la f(x) y)$ is given by $\textrm{div} \: X_{\la}(x,y) = -\la f(x)$, for every $(x,y) \in \R^2$. Since $f(x) < 0$ for $x \in (x_M,x_m)$ then $\textrm{div} \: X_{\la}(x,y) > 0$ for $x \in (x_M,x_m)$ and $y \in \R$. Therefore, it follows from Bendixson's Criterion, that the limit cycle $\Phi(\la)$ must intersect one of the straight lines $x=x_M$ or $x=x_m$.
\end{proof}

\begin{figure}[h]
	\begin{overpic}[width=5.5cm]{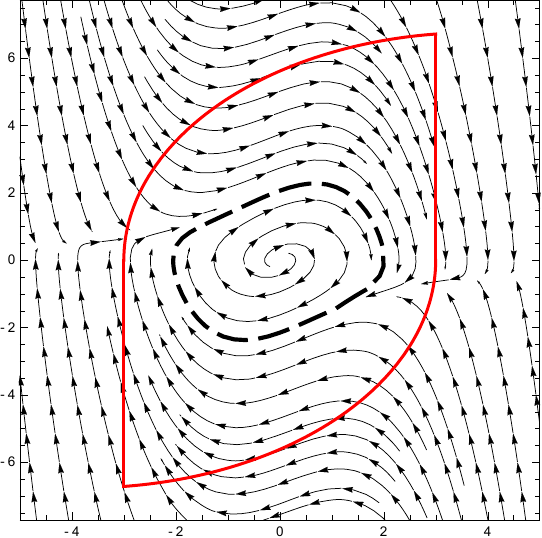}
	\end{overpic}
	\caption{\footnotesize Phase portrait of $X_{\la}$ in the van der Pol case (i.e. $f(x)=x^2-1$) assuming $\la=1/2$. The red line represents the boundary of the region $\R_{\la}$. In order to plot $\R_{\la}$ it was assumed that $x_0=3$, nevertheless the exactly value for $x_0$ is uknown. The dashed bold line represents the limit cycle.}
	\label{Rlambda}
\end{figure}

In order to build the region $R_{\la}$ we will need the following proposition.

\begin{proposition}\label{Rl}
Let $h_{\la}(x,y)=y-\gamma(x;\la)$, where $\gamma$ is defined in \eqref{gamma}, and let $X_{\la}(x,y)=(y,-x-\la f(x) y)$ be the vector field defined by system \eqref{S1}. Then, $\langle \nabla  h_{\la}(x,y),X_{\la}(x,y)\rangle<0$ for $y=\gamma(x;\la)$, $x\in(-x_0,x_0)$ and $\la>0$.
\end{proposition}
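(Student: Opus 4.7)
My plan is a direct computation: I would evaluate $\langle \nabla h_\la, X_\la\rangle$ on the curve $y=\ga(x;\la)$, express it in closed form in each of the two cases defining $\ga$, and reduce the desired inequality to an elementary algebraic identity for $\ga^2$.

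Since $\nabla h_\la=(-\ga_x,1)$ and $X_\la=(y,-x-\la f(x) y)$, along $y=\ga(x;\la)$ we have
\[
L(x,\la) := \langle \nabla h_\la, X_\la\rangle\big|_{y=\ga} = -(\ga\,\ga_x+x) - \la f(x)\,\ga.
\]
The key preliminary computation is $\ga\,\ga_x+x$, obtained by differentiating $\ga^2$. In Case~1 ($0<\la\leq -1/(2m)$) a direct calculation from $\ga^2=(x_0^2-x^2-m\la(x+x_0)^2)/(1+m\la)$ yields $\ga\,\ga_x+x = -m\la x_0/(1+m\la)>0$, and in Case~2 ($\la> -1/(2m)$) differentiating $\ga^2=x_0^2-x^2+8m^2\la^2 x_0(x+x_0)$ yields $\ga\,\ga_x+x = 4m^2\la^2 x_0>0$. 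Then the crude estimate $-\la f(x)\ga\leq -\la m\ga$, valid since $f(x)\geq m$ and $\ga\geq 0$, provides the majorant
\[
L(x,\la) \leq -(\ga\,\ga_x+x) - \la m \ga.
\]

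It remains to show that the right-hand side is nonpositive. In Case~1 it rewrites as $\la m\bigl(x_0/(1+m\la)-\ga(x;\la)\bigr)$; since $\la m<0$, negativity is equivalent to $\ga(x;\la)\leq x_0/(1+m\la)$. Using $u=x+x_0$, the identity $\ga^2 = x_0^2/(1+m\la)^2 - (u-x_0/(1+m\la))^2$ makes this transparent, with equality only at the single point $x^\star:=-m\la x_0/(1+m\la)$. In Case~2 the desired bound reduces to $\ga^2<16m^2\la^2 x_0^2$, which factors as $(x_0-x)(x_0+x-8m^2\la^2 x_0)<0$; the condition $8m^2\la^2>2$ (forced by $\la>-1/(2m)$) makes the second factor strictly negative for every $x\leq x_0$, while the first is positive for $x<x_0$, so the product is strictly negative throughout the open interval.

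The main obstacle is the strict inequality at the single point $x^\star$ in Case~1, where both majorants saturate simultaneously. At that point one computes directly $L(x^\star,\la) = \la x_0(m-f(x^\star))/(1+m\la)$, so strictness is equivalent to $f(x^\star)>m$. By hypothesis $\textbf{(A1)}$ the global minimum $m$ of $f$ on $[-x_0,x_0]$ is attained either at an endpoint (excluded since $x^\star\in(0,x_0)$ whenever $\la<-1/(2m)$) or at the interior local minimum $x_m$ given by $\textbf{(A1)}$; since $x^\star$ depends continuously on $\la$, the identification $x^\star=x_m$ can occur, if at all, only for an isolated value of $\la$, where strictness at $x^\star$ follows from the strict inequality at neighbouring $x$ together with continuity. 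This, combined with the Case~2 analysis, yields $L(x,\la)<0$ throughout $(-x_0,x_0)\times(0,\infty)$.
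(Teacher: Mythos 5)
Your computation is, in substance, the same as the paper's: the authors also evaluate the Lie derivative along $y=\ga(x;\la)$, writing it as $-\la\ga\bigl(f(x)-mx_0/\sqrt{p(x)}\bigr)$ in the first case and $-\la\ga\bigl(f(x)+4m^2\la x_0/\sqrt{q(x)}\bigr)$ in the second, then bound the quadratic under the square root by its maximum over $[-x_0,x_0]$ and finish with $f(x)\geq m$. Your identities $\ga\,\ga_x+x=-m\la x_0/(1+m\la)$ and $\ga\,\ga_x+x=4m^2\la^2x_0$ are exactly equivalent to that, and your Case~2 argument is complete: there the bound $\ga<-4m\la x_0$ is strict on all of $(-x_0,x_0)$, so the conclusion is strict even at points where $f=m$.

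The gap is in your treatment of the saturation point $x^\star=-m\la x_0/(1+m\la)$ in Case~1. You are right that this point needs separate attention (the paper's own chain of inequalities ends with $-\la\ga(f(x)-m)<0$, which at a minimizer of $f$ is only $\leq 0$, so the issue is real and the published proof passes over it), and your exact formula $L(x^\star,\la)=\la x_0\bigl(m-f(x^\star)\bigr)/(1+m\la)$ correctly reduces strictness there to $f(x^\star)>m$. But your argument that this always holds fails on two counts. First, the point $x_m$ of hypothesis \textbf{(A1)} is a \emph{zero} of $f$ with $f'(x_m)>0$, not a minimizer: since $m<0$, the minimum of $f$ on $[-x_0,x_0]$ is attained at some interior point of $(x_M,x_m)$ about which \textbf{(A1)} gives no information, and that point may perfectly well lie in $(0,x_0)$ --- precisely the range swept by $x^\star$ as $\la$ runs over $(0,-1/(2m))$. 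Second, even granting that the coincidence happens only for an isolated $\la$, continuity of $L$ together with $L<0$ at neighbouring points yields only $L(x^\star,\la)\leq 0$, never $L(x^\star,\la)<0$; indeed your own closed formula shows $L(x^\star,\la)=0$ whenever $f(x^\star)=m$, so no limiting argument can restore strictness. To obtain the proposition as stated one must either rule out $f(x^\star)=m$ by an additional hypothesis or observation, or settle for $\leq 0$ at that single point (which is all the displayed estimates, yours and the paper's, actually prove).
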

\begin{proof}

Firstly, let us assume that $\la\leq-1/(2m)$, where $m=\min\{f(x):\, -x_0\leq x\leq x_0\}$. In this case
\[
\ga(x;\la)=\sqrt{\dfrac{(x+x_0)(x_0-x-m\la(x+x_0))}{1+m\la}}
\]
and
\[
\langle \nabla  h_{\la}(x,y),X_{\la}(x,y)\rangle=-\la \gamma(x;\la)\left(f(x)-\dfrac{m x_0}{\sqrt{p(x)}}\right),
\]
where
\[
p(x)=-  (1 + m \la)^2 x^2 -2 m x_0 \la (1 + m \la)\, x + 
 x_0^2 (1 - m^2 \la^2).
\]
Note that $p(x)$ has a maximum at $\ov x=-m x_0\la/(1+ m\la)\in(-x_0,x_0)$ and $p(\ov x)=x_0^2$. Therefore
\begin{equation}\label{eq1}
\begin{array}{rl}
\langle \nabla  h_{\la}(x,y),X_{\la}(x,y)\rangle& \hspace{-.25cm} \leq-\la \gamma(x;\la)\left(f(x)-\dfrac{m x_0}{\sqrt{p(\ov x)}}\right)\\
& \hspace{-.25cm} =-\la \gamma(x;\la)\left(f(x)-m\right)<0.
\end{array}
\end{equation}

Now assume that $\la> -1/(2m)$. In this case 
\[
\ga(x;\la)=\sqrt{(x+x_0)((1+8m^2\la^2)x_0-x)},
\]
and
\[
\langle \nabla  h_{\la}(x,y),X_{\la}(x,y)\rangle=-\la \gamma(x;\la)\left(f(x)+\dfrac{4 m^2 \la x_0}{\sqrt{q(x)}}\right),
\]
where
\[
q(x)=-x^2 + 8 m^2 \la^2 x_0 \, x +  (1 + 8 m^2 \la^2)x_0^2.
\]
Note that $q(x)$ has a maximum at $\hat x=4 m^2 \la^2 x_0>x_0$. Hence, for $x\in(-x_0,x_0)$, $q$ reaches its maximum at $x_0$ and $q(x_0)=(4 m \la x_0)^2$. Therefore
\begin{equation}\label{eq2}
\begin{array}{rl}
\langle \nabla  h_{\la}(x,y),X_{\la}(x,y)\rangle& \hspace{-.25cm} \leq-\la \gamma(x;\la)\left(f(x)+\dfrac{4 m^2 \la x_0}{\sqrt{q(x_0)}}\right)\\
& \hspace{-.25cm} =-\la \gamma(x;\la)\left(f(x)-m\right)<0.
\end{array}
\end{equation}
From \eqref{eq1} and \eqref{eq2} we conclude this proof.
\end{proof}

The next result, which is an immediate consequence of Proposition \ref{Rl} and item (iv) of Theorem \ref{Thm1}, implies that $\Phi(\la)$ is contained in the interior of $R_{\la}=\{(x,y)\in\R^2:\, -x_0\leq x\leq x_0,\, -\ga(-x;\la)\leq y\leq \ga(x;\la)\}$ (see Figure \ref{Rlambda}). Note that the diameter of $R_{\la}$ is given by
\[
\begin{array}{rl}
\textrm{dim}(R_{\la})& \hspace{-.25cm} =d\big((-x_0,-\gamma(x_0;\la)),(x_0,\gamma(x_0;\la))\big) \vspace{.2cm}\\
& \hspace{-.25cm} =\left\{\begin{array}{ll}
		\dfrac{2x_0\sqrt{1+m^2\la^2}}{1+m\,\la}, & 0<\la\leq-\dfrac{1}{2 m},\\
		2x_0\sqrt{1+16 m^2 \la^2},& \la> -\dfrac{1}{2 m},\\
		\end{array}\right.
		\end{array}
\]
where $d$ denotes the usual metric of $\R^2$.

\begin{corollary}\label{RR}
For each $\la>0$, it holds that $\Phi(\la)\cap\{(x,\gamma(x;\la)):\,-x_0\leq x\leq x_0\}=\emptyset$ and $\Phi(\la)\cap\{(x,-\gamma(-x;\la)):\,-x_0\leq x\leq x_0\}=\emptyset$.
\end{corollary}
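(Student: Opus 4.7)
The plan is to argue by contradiction, leveraging the strict transversality of the flow of $X_{\la}$ to the boundary arcs of $R_{\la}$ guaranteed by Proposition \ref{Rl}. Denote the upper and lower arcs by
\[
C^+_{\la}=\{(x,\gamma(x;\la)):-x_0\le x\le x_0\},\qquad C^-_{\la}=\{(x,-\gamma(-x;\la)):-x_0\le x\le x_0\}.
\]
Parametrize $\Phi(\la)$ by the flow of $X_{\la}$ as $\phi(t)$ with period $T>0$ and set $\eta(t)=h_{\la}(\phi(t))$, so that $\eta$ is smooth, $T$-periodic, and $\eta'(t)=\langle\nabla h_{\la}(\phi(t)),X_{\la}(\phi(t))\rangle$. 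By item (iv) of Theorem \ref{Thm1}, the curve $\Phi(\la)$ lies in the open strip $\{-x_0<x<x_0\}$, so every possible point of $\Phi(\la)\cap C^+_{\la}$ has $x$-coordinate in $(-x_0,x_0)$, and Proposition \ref{Rl} yields $\eta'(t)<0$ at each such instant.

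Suppose, for contradiction, that $\Phi(\la)\cap C^+_{\la}\neq\emptyset$. Since $\Phi(\la)$ is a genuine closed curve while $C^+_{\la}$ is merely an arc, $\eta$ is not identically zero. If $\max_t\eta(t)>0$, pick a connected component $(a,b)$ of the open set $\{\eta>0\}$ in $\R/T\Z$; then $\eta(a)=0$ and $\eta>0$ just to the right of $a$, forcing $\eta'(a)\ge 0$, in contradiction with Proposition \ref{Rl}. If instead $\max_t\eta(t)\le 0$ but $\eta$ vanishes at some $t_*$, then $t_*$ is a global maximum, so $\eta'(t_*)=0$, again contradicting Proposition \ref{Rl}. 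Either way, $\Phi(\la)\cap C^+_{\la}=\emptyset$.

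For the lower arc I would repeat the argument with $h_{\la}$ replaced by $\tilde h_{\la}(x,y)=y+\gamma(-x;\la)$. A short computation analogous to the one in the proof of Proposition \ref{Rl}, carried out after the substitution $u=-x$, reduces the required transversality $\langle\nabla\tilde h_{\la},X_{\la}\rangle>0$ on $C^-_{\la}\cap\{-x_0<x<x_0\}$ to the very same algebraic estimate used there, the only difference being that $f(x)$ is replaced by $f(-x)$; since $f(-x)\ge m$ on $[-x_0,x_0]$ by the definition of $m$, the bound remains valid. Applying the identical dichotomy to $\tilde\eta(t)=\tilde h_{\la}(\phi(t))$ yields $\Phi(\la)\cap C^-_{\la}=\emptyset$.

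The only delicate point is handling the tangential case $\max\eta=0$, where the contradiction must come from the strict inequality of Proposition \ref{Rl} at what would be an interior maximum of $\eta$; apart from this, the transference of Proposition \ref{Rl} to the lower arc $C^-_{\la}$ is the only piece of bookkeeping that is not strictly immediate from the statements already proved.
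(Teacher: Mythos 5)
Your proof is correct and follows the route the paper intends: Corollary \ref{RR} is stated there as an immediate consequence of Proposition \ref{Rl} and item (iv) of Theorem \ref{Thm1}, and your argument with $\eta(t)=h_{\la}(\phi(t))$ (ruling out both a sign change and a tangential touching via the strict inequality $\eta'<0$ at any zero of $\eta$) is exactly the standard way to convert strict transversality into non-intersection for a periodic orbit. You are also right to flag that the lower arc is not literally covered by Proposition \ref{Rl} as stated; your observation that the substitution $x\mapsto -x$ reduces the required inequality on $\{(x,-\gamma(-x;\la))\}$ to the same algebraic estimate with $f(-x)\ge m$ on the symmetric interval $[-x_0,x_0]$ correctly fills that small gap, which the paper leaves implicit.
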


\section{Examples}

In this section, we illustrate the main result of the article with some examples. We start with the well known van der Pol equation. After that we will consider some examples where the function $f$ is not polynomial.

\subsection{The van der Pol equation} 

The van der Pol equation is a well known prototypical example where relaxation oscillations occur \cite{DurRou1996,Pol1926}. It is given by the equation \eqref{Eq1} with $f(x) = x^2-1$. For this function it is immediate to check that hypotheses {\bf (A1)} and {\bf (A2)} are fullfiled. In fact, we have that $x_M=-1$ and $x_m=1$ are the only zeros of $f$ with $f'(-1)=-2<0$ and $f'(1)=2>0$. The auxiliary function $F$ is given by $F(x) = x^3/3 - x$, and we see that the straight lines $y=F(x_M)=2/3$ and $y=F(x_m)=-2/3$, passing through the points $A = (x_M,F(x_M)) = (-1,2/3)$ and $B = (x_m,F(x_m))=(1,-2/3)$, intersect again the graphic of $F$ at the points $A' = (x_2,F(x_M)) = (2,2/3)$ and $B'= (x_1,F(x_m))=(-2,-2/3)$, respectively. Moreover, the constants $x_1^*$, $x_2^*$, $x^*$, and $r^*$ appearing in \eqref{xr} assume the values $-\sqrt{3}$, $\sqrt{3}$, $\sqrt{3}$, and $4$, respectively. Finally, it is well known that hypothesis {\bf (A3)} holds for the van der Pol equation.

Computing the function $\ov{F}$ given in  \eqref{Fbar} for the van der Pol equation we obtain $\ov{F}(r) = (r^2- 4)\pi /8.$
The unique positive zero of $\ov{F}$ is $\rho = 2$. Moreover, note that $2 \in (x^*,r^*)$, that is $2 \in (\sqrt{3},4)$. 
Therefore, from Theorem A, we conclude that, for every $\lambda > 0$, the van der Pol equation has a unique stable limit cycle $\Phi(\lambda)$. For sufficiently small values of $\lambda$ the cycle $\Phi(\lambda)$ approaches to the circle centered at the origin of radius $2$. We notice that this fact is known and fairly discussed in the literature (see, for instance, \cite{M62,SVM}). For sufficiently large values of $\lambda$, after the change of coordinates $P_{\lambda }(x,y)=\left(x,x^3/3-x+y/\lambda \right)$, the limit cycle $\Phi(\lambda )$ approaches a singular trajectory $\Gamma_0$.   

The van der Pol example can be generalized as follows. Let $f: \R \rightarrow \R$ be given by
$$f(x) = (x-x_M)(x-x_m)\bar{f}(x)$$ 
where $x_M<0<x_m$ and $\bar{f}: \R \rightarrow \R$ is a polynomial function satisfying $\bar{f}(x) > 0$ for every $x \in \R$.
Clearly, $f$ has precisely two real zeros, $x_M$ and $x_m$. Moreover, $f'(x_M) = (x_M-x_m)\bar{f}(x_M) < 0$ and $f'(x_m) = (x_m-x_M)\bar{f}(x_m) > 0$. Hence, the condition {\bf (A1)} is satisfied. 

Regarding the auxiliary function $F$, note that if $\bar{f}(x) = c > 0$ is constant, then $F$ is given by
$$F(x) = \int_0^x f(s)ds = c \bigg( \dfrac{x^3}{3} - (x_M+x_m)\dfrac{x^2}{2} + x_M x_m x \bigg).$$
Now if $\bar{f}$ is not constant, it takes the form
$$\bar{f}(x) =k \displaystyle{\prod_{j=1}^{n} \Big(x - (x_j+iy_j) \Big) \Big(x - (x_j-iy_j) \Big)},$$
with $k>0$. In this case the function $F$ is given by
$$F(x) = \int_0^x f(s)ds = k\dfrac{x^{2(n+1)+1}}{2n+3} + H(x),$$
where $H$ is a polynomial function of degree less than or equal to $2n+2$. 
In both cases ($\bar{f}$ constant or not), the following limits hold
$$
\lim_{x\to +\infty} F(x) = +\infty, \quad \textrm{and} \quad  \lim_{x\to -\infty} F(x) = -\infty,
$$
which imply the condition {\bf (A2)}. Therefore, assuming further hypothesis {\bf (A3)}, for the function $f$ like above, the conclusions of Theorem \ref{Thm1} apply. We emphasize that, in the Appendix section, sufficient conditions in order to guarantee hypothesis {\bf (A3)} are given. Particularly for the generalized van der Pol equation one could assume that $x_M=-x_m$.

\subsection{Non--polynomial examples}

In this subsection we provide examples of non--polynomial functions for which our hypotheses are fulfilled. We start with the following class of rational functions that generalizes the polynomial case
$$f(x) = \dfrac{(x-x_M)(x-x_m)p(x)}{q(x)},$$
where $x_M<0<x_m$, $p$ and $q$ are polynomial functions such that $p(x)q(x)>0$ for all $x \in \R$, and $\deg(q)< 2+\deg(p)$. Clearly, $f$ has precisely two real zeros, $x_M$ and $x_m$. Moreover, $f'(x_M) = (x_M-x_m)p(x_M)/q(x_M) < 0$ and $f'(x_m) = (x_m-x_M)p(x_m)/q(x_m) > 0$. Hence, the hypothesis {\bf (A1)} is satisfied. Since $\deg(q)< 2+\deg(p)$ and $p(x)q(x)>0$ for all $x \in \R$, then $\lim_{x \to \pm \infty} f(x) = +\infty$. This implies that
 $$
 \lim_{x\to +\infty} F(x) = \lim_{x\to +\infty} \int_0^x f(s)ds = \int_0^{+\infty} f(s)ds =  +\infty, 
 $$
 and
$$
\lim_{x\to -\infty} F(x) = \lim_{x\to -\infty} \int_0^x f(s)ds = -\lim_{x\to -\infty} \int_x^0 f(s)ds = -\int_{-\infty}^0 f(s)ds =  -\infty.
$$
In particular we can conclude that hypothesis {\bf (A2)} is fulfilled. Therefore, under the qualitative assumption {\bf (A3)}, Theorem \ref{Thm1} holds for the class of rational functions given above.

We also can find other examples of functions $f$, that are neither polynomial nor rational, such that our hypotheses are fulfilled. Below we list some of them:
$$
\begin{aligned}
f_1(x)= & \:  \exp(x)+\exp(-x)-b, \:\:\:\: \textrm{with} \:\: b>2, \\
f_2(x)= & \:  (2x^2-1)\exp(-x^2)+a, \:\: \textrm{with} \:\: 0<a<1.
\end{aligned}
$$
In fact, for the function $f_1$ we have that $x_M = \ln((b-\sqrt{b^2-4})/2)$ and $x_m = \ln((b+\sqrt{b^2-4})/2)$ are their unique zeros. Evaluating the derivative $f_1'$ in these two zeros gives $f_1'(x_M) = -\sqrt{b^2-4} < 0$ and $f_1'(x_m) = \sqrt{b^2-4} > 0$. Moreover, it can be checked that the auxiliary function $$F_1(x) = \int_0^x f_1(s)ds = \exp(x)-\exp(-x)-bx$$ satisfies
$$
\lim_{x\to +\infty} F_1(x) = +\infty, \quad \textrm{and} \quad  \lim_{x\to -\infty} F_1(x) = -\infty.
$$
Therefore, the conditions {\bf (A1)} and {\bf (A2)} are valid for the function $f_1$. 

With respect to the function $f_2$, we have that $x_M = -\sqrt{1/2-W(a\sqrt{e}/2)}$ and $x_m = \sqrt{1/2-W(a\sqrt{e}/2)}$ are their unique zeros, where $W$ is the Lambert function (principal branch), see \cite{Corless1996}. Note that since $0<a<1$, then $0 < W(a\sqrt{e}/2) < 1/2$. Evaluating $f_2'$ in these two zeros one obtains 
$$f_2'(x_M) = -\dfrac{a(1+W(a\sqrt{e}/2))\sqrt{2-4W(a\sqrt{e}/2)}}{W(a\sqrt{e}/2)} < 0$$
and 
$$f_2'(x_m) = \dfrac{a(1+W(a\sqrt{e}/2))\sqrt{2-4W(a\sqrt{e}/2)}}{W(a\sqrt{e}/2)} > 0.$$
The auxiliary function $F_2$ is given by 
$$F_2(x) = \int_0^x f_2(s)ds = x(a-\exp(-x^2)).$$
It is easy to check that $F_2$ satisfies
$$
\lim_{x\to +\infty} F_2(x) = +\infty, \quad \textrm{and} \quad  \lim_{x\to -\infty} F_2(x) = -\infty.
$$
Therefore, the conditions {\bf (A1)} and {\bf (A2)} are valid for the function $f_2$.

Note that for both functions $f_1$ and $f_2$, one has that $x_M=-x_m$. Hence, the condition {\bf (D2)} of Proposition \ref{ap:prop} of the Appendix section holds. Therefore, the condition {\bf (A3)} is valid for the functions $f_1$ and $f_2$. Consequently, the conclusions of Theorem \ref{Thm1} apply for these functions.

\section{Conclusion}\label{ConclusionSection}

In this paper, we consider one-parameter $\la>0$ families of Li\'{e}nard differential equations \eqref{Eq1} (equivalently the differential system \eqref{S1}). For each positive value of the parameter $\lambda$, we prove the existence of a limit cycle $\Phi(\la)$ as well as its continuous dependence on $\la$. We also provide the asymptotic behavior of such limit cycle for small and large values of $\la>0$.

For small values of the parameter $\la>0$, the differential system \eqref{S1} can be seen as a regular perturbation of the linear center $(y,-x)$. In this context the averaging theory provides useful tools for studying the birth of limit cycles from the periodic solutions of the center. When $\la$ assumes large values, the differential system \eqref{S1} can be converted into a slow--fast singularly perturbed system which can be treated with the techniques coming from the geometric singular perturbation theory.

Initially, the hypotheses {\bf (A1)} and {\bf (A2)} were assumed in order to guarantee that the manifold $\mathcal{S}$ was S-shaped (reflected) assuring then (Proposition \ref{PropRelaxationOscillation}) the existence of a relaxation oscillation approaching to the singular trajectory $\G_0$, when the parameter $\lambda$ takes large values. Surprisingly, the same hypotheses, {\bf (A1)} and {\bf (A2)}, also guaranteed the existence of a zero $\rho$ of the averaged function, satisfying some good properties on its Brouwer degree, which assured (Proposition \ref{av:prop}) the existence of a limit cycle approaching to the circle $S_\rho$, when $\la>0$ takes small values.

Under hypothesis {\bf (A3)} we were able to show that the limit cycle existing nearby the circle $S_{\rho}$, for small values of $\la>0$, is deformed continuously and increases its amplitude, when $\la$ becomes larger, approaching then to the singular trajectory $\G_0$. We also estimate the amplitude growth of the limit cycle.

\section*{Appendix: Uniqueness of the limit cycle}\label{AppendixSection}

First, recall that the existence of a limit cycle for system \eqref{S1} is a direct consequence of Dragil\"{e}v's Theorem \cite{Dragilev1952} (see Section 2). In addition, from \cite{V83}[Theorem 1], the existence of a limit cycle is obtained by assuming the continuity of $f$, $f(0)<0$, and that $f(x)$ and $xF(x)$ are positive for $|x|$ large enough. Thus, in this appendix, we provide some analytical conditions in order to fulfill hypothesis {\bf (A3)} of Theorem \ref{Thm1}: 
\begin{center}
{\it ``The differential system \eqref{S1} has at most one limit cycle.''} 
\end{center}
Accordingly, we shall prove the following result.

\begin{proposition}\label{ap:prop}
Assume hypotheses {\bf (A1)} and {\bf (A2)}. Then, the differential system \eqref{S1} has at most one periodic solution provided that at least one of the following conditions holds:
\begin{itemize}
\item[{\bf (D1)}] $F(\pm\infty)=\pm\infty$ and $x_1^*=-x_2^*$.
\item[{\bf (D2)}] $F(\pm\infty)=\pm\infty$ and $x_M=-x_m$.
\item[{\bf (D3)}] $f$ is nonincreasing in $(-\infty,0)$ and nondecreasing in $(0,+\infty)$.
\item[{\bf (D4)}] $F(x)/x$ is nonincreasing in $(-\infty,0)$ and nondecreasing in $[x_2^*,+\infty)$, and  $x_2^*\leq -x_1^*$.
\end{itemize}
\end{proposition}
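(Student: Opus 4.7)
The plan is to reduce the system \eqref{S1} to a standard Li\'enard form and then, for each of the four conditions {\bf (D1)}--{\bf (D4)}, invoke a corresponding classical uniqueness theorem from the Li\'enard literature (as collected, for instance, in the books \cite{Ye, Zha1992}), verifying that the hypotheses of that theorem follow from {\bf (A1)}, {\bf (A2)}, and the additional condition at hand.

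First I would apply the Li\'enard transformation $u = y/\la + F(x)$, which for each $\la>0$ is a global diffeomorphism of $\R^2$ and hence preserves the number of closed orbits. Exactly as in the proof of Proposition \ref{PropRelaxationOscillation}, after an appropriate time rescaling the system \eqref{S1} takes the standard Li\'enard form
\begin{equation*}
\dot x = u - F(x), \qquad \dot u = -x,
\end{equation*}
for which hypotheses {\bf (A1)} and {\bf (A2)} translate into well-understood geometric conditions on the graph of $F$ in the Li\'enard plane: $F$ has exactly one local maximum $A$ and one local minimum $B$, the horizontal lines through $A$ and $B$ cut the graph in three points each, and (under {\bf (D1)} or {\bf (D2)}) also $F(\pm\infty)=\pm\infty$. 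Uniqueness of the limit cycle for the transformed Li\'enard system then gives uniqueness for \eqref{S1}.

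Second, I would treat each case separately. Condition {\bf (D3)}, the one-sided monotonicity of $f$, is exactly the hypothesis of the classical Li\'enard--Massera--Sansone uniqueness theorem, and applies directly after the transformation. Conditions {\bf (D1)} and {\bf (D2)} provide a symmetry (of the nontrivial zeros of $F$, respectively of the zeros of $f$) together with $F(\pm\infty)=\pm\infty$; here I would invoke Zhang Zhifen's uniqueness theorem for Li\'enard equations with suitable symmetric $F$, combined with an even/odd comparison of the two branches of $F$ across the $y$-axis. Condition {\bf (D4)}, the monotonicity of $F(x)/x$ on appropriate intervals, matches a Rychkov-type criterion; the standard argument compares two hypothetical nested cycles through the energy integral $\oint u\, dx$ and shows that $F(x)/x$-monotonicity forces its variation between the cycles to have a definite sign, yielding a contradiction.

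The main obstacle I anticipate is case {\bf (D4)}, where the monotonicity of $F(x)/x$ is assumed only on $[x_2^*,+\infty)$ rather than on all of $(0,+\infty)$, so the classical Rychkov theorem is not directly applicable. The extra assumption $x_2^*\leq -x_1^*$ is what has to bridge the gap: using the sign structure of $F$ given by {\bf (A1)}--{\bf (A2)} on the intervals $(x_1^*,0)$ and $(0,x_2^*)$ (where $F$ has the favorable sign for the energy argument regardless of monotonicity), together with the symmetric-type bound $x_2^*\leq -x_1^*$, one should be able to estimate the contribution of the inner strip $|x|\leq x_2^*$ by a reflection comparison and reduce the problem to the monotonic regime $[x_2^*,+\infty)$ and $(-\infty,0)$. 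An analogous reflection argument would be used to deduce {\bf (D1)} and {\bf (D2)} from Zhang Zhifen's symmetric theorem by symmetrizing $F$ on one side and controlling the asymmetric remainder.
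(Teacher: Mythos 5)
Your overall strategy --- reducing each case to a classical Li\'enard uniqueness theorem and checking its hypotheses from \textbf{(A1)}, \textbf{(A2)} and the extra condition --- is the same as the paper's, and your treatment of \textbf{(D3)} via the Massera-type theorem is exactly what the paper does (the preliminary passage to the Li\'enard plane is harmless but unnecessary, since the classical statements apply to \eqref{S1} directly). However, for the other three cases you have matched the conditions to theorems that do not quite fit, and then proposed unproven bridging arguments where none are needed. For \textbf{(D1)} and \textbf{(D2)} the paper invokes Sansone's two theorems (Theorems \ref{Sansone1} and \ref{Sansone2}): the first requires $F(\pm\infty)=\pm\infty$ together with a symmetric pair of zeros $F(\Delta)=F(-\Delta)=0$, which is literally the content of $x_1^*=-x_2^*$ (take $\Delta=x_2^*$); the second requires $f<0$ for $|x|<\delta$ and $f>0$ for $|x|>\delta$, which is literally $x_M=-x_m$ (take $\delta=x_m$). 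No symmetrization of $F$ and no control of an ``asymmetric remainder'' is required, and the Zhang Zhifen-type theorem you cite concerns monotonicity of $f/g$, not symmetry; the argument you sketch for these two cases is left entirely open.

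The more serious gap is \textbf{(D4)}. You correctly note that a Rychkov-type criterion demanding monotonicity of $F(x)/x$ on all of $(0,+\infty)$ does not apply, but the reflection/energy comparison you propose to ``bridge the gap'' is precisely the hard part and is only asserted, not carried out. The paper instead quotes Figueiredo's theorem (Theorem \ref{Figueiredo}), which is already stated with monotonicity of $F(x)/x$ required only on $[\delta,+\infty)$, at the price of the extra sign hypothesis $x\,F(x)\le 0$ for $|x|\le\delta$. Taking $\delta=x_2^*$, that sign condition on $[-x_2^*,x_2^*]$ holds because $F>0$ on $(x_1^*,0)$, $F<0$ on $(0,x_2^*)$, and $-x_2^*\ge x_1^*$; in other words, the assumption $x_2^*\le -x_1^*$ serves to verify a sign hypothesis of an existing theorem, not to repair a missing monotonicity hypothesis. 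Without identifying this (or an equivalent) statement, your case \textbf{(D4)} amounts to an unfinished re-proof of a known theorem rather than a proof of the proposition.
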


The next two theorems are due to Sansone \cite{Sansone}. A proof for them can also be found in \cite{Zha1992}, see Theorems 4.2 and 4.3 of its Chapter 4.

\begin{theorem}[\cite{Sansone}]\label{Sansone1}
Consider the differential system \eqref{S1} and assume that $f$ is continuous and that $F(\pm\infty)=\pm\infty$. Suppose that there exist $\delta_{-1}<0<\delta_1$ and  $\Delta>0$ such that $f(x)<0$ for $x \in (\delta_{-1},\delta_1)$, $f(x)>0$ for $(-\infty,\delta_{-1}) \cup (\delta_1,\infty)$, and $F(\Delta)=F(-\Delta)=0$. Then, the differential system \eqref{S1} has a unique limit cycle, which is stable.
\end{theorem}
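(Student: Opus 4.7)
The plan is to prove Sansone's theorem in three stages: a Li\'enard change of coordinates which furnishes a Lyapunov-type function, existence via Dragilev's theorem, and uniqueness by a Massera--Sansone integral comparison. Stability will then follow automatically.

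First I would apply the Li\'enard change of coordinates $u=y+\la F(x)$, which converts system \eqref{S1} into the classical Li\'enard form
\[
\dot x = u-\la F(x),\qquad \dot u = -x.
\]
The quadratic energy $W(x,u)=(x^2+u^2)/2$ then satisfies $\dot W=-\la\, x F(x)$ along trajectories. The hypotheses give $F$ with a local maximum at $\de_{-1}<0$, a local minimum at $\de_1>0$, and zeros exactly at $-\Delta<0<\Delta$. Consequently $x F(x)\le 0$ on the strip $|x|\le\Delta$ and $x F(x)\ge 0$ outside it, so $W$ is nondecreasing along the flow in the central strip and nonincreasing outside. For existence I would then verify the hypotheses of Theorem \ref{DT} applied to this Li\'enard system: $g(x)=x$ is Lipschitz and coercive via $G(x)=x^2/2$, giving \textbf{(B1)} and \textbf{(B2)}; the sign behavior of $F$ near $0$ (with $a_1=\de_{-1}$, $a_2=\de_1$) gives \textbf{(B3)}; and $F(\pm\infty)=\pm\infty$ with any $k>\Delta$, $b_1=\sup_{x\ge k}(-F)<0<b_2=\inf_{x\le -k}(-F)$ (after a sign bookkeeping adjustment), gives \textbf{(B4)}. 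This yields at least one stable limit cycle.

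The main obstacle is uniqueness. Suppose, for contradiction, that two distinct periodic orbits coexist. Since periodic orbits of a planar $C^1$ vector field cannot cross, one of them, $\G_1$, lies in the interior region bounded by the other, $\G_2$. On any periodic orbit $\oint dW=0$, so
\[
J(\G_i):=-\la\oint_{\G_i} x F(x)\,dt=\la\oint_{\G_i} F(x)\,du=0,\qquad i=1,2,
\]
after using $\dot u=-x$ to rewrite $x\,dt=-du$. I would then follow the Massera--Sansone scheme: decompose each cycle into the portion inside the strip $\Sigma=\{|x|\le\Delta\}$ and the two portions outside it, and parameterize each arc as a graph $u=u_i^\pm(x)$ on its monotone branches. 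Since $\G_2$ encloses $\G_1$, the upper branch $u_2^+$ lies strictly above $u_1^+$ and the lower branch $u_2^-$ strictly below $u_1^-$. A direct pointwise comparison of $F(x)\,du$ over corresponding $x$-intervals yields strict inequalities
\[
\int_{\G_2\cap\Sigma}F(x)\,du<\int_{\G_1\cap\Sigma}F(x)\,du,\qquad \int_{\G_2\setminus\Sigma}F(x)\,du<\int_{\G_1\setminus\Sigma}F(x)\,du,
\]
summing to $0=J(\G_2)<J(\G_1)=0$, a contradiction. The genuine technical difficulty here is the outer comparison on $x\ge\Delta$ and $x\le-\Delta$, where $F$ has one sign on each side but $du$ changes sign between the upper and lower branches and one must switch variable carefully at the vertical tangencies of each cycle with the lines $x=\pm\Delta$; the hypothesis $F(\Delta)=F(-\Delta)=0$ is precisely what makes the boundary contributions at $x=\pm\Delta$ vanish and keeps the comparison strict.

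Having established existence of a stable limit cycle via Dragilev and uniqueness via the Massera--Sansone argument, the unique limit cycle must coincide with the stable one, so it is stable as claimed.
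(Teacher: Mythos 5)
The paper does not actually prove this statement: it is Sansone's classical theorem, quoted with a pointer to \cite{Sansone} and to Theorem 4.2 of Chapter 4 of \cite{Zha1992}, so there is no in--paper proof to compare your attempt against. On its own merits, your existence step is sound: the Li\'enard change of variables, the energy $W$ with $\dot W=-\lambda xF(x)$, and the verification of \textbf{(B1)}--\textbf{(B4)} of Theorem \ref{DT} all go through (your formula for $b_1,b_2$ is garbled, but $b_1=\lambda F(-k)<0<b_2=\lambda F(k)$ for any $k>\Delta$ works since $F$ is increasing outside $(\delta_{-1},\delta_1)$ and $F(\pm\Delta)=0$). The closing observation that ``at most one'' plus ``at least one stable'' yields the conclusion is also fine.

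The genuine gap is in the uniqueness step, which is the substance of the theorem: the ``direct pointwise comparison of $F(x)\,du$ over corresponding $x$-intervals'' establishes neither of the two inequalities you assert. Outside the strip, say on $x\ge\Delta$, writing $F\,du=\tfrac{-xF(x)}{u-F(x)}\,dx$ on the upper branches gives an integrand that is $\le 0$ and moves \emph{toward} zero as $u$ increases, so on the common $x$-range the outer cycle's integrand is pointwise \emph{larger} than the inner one's --- the inequality goes the wrong way, and the extra range $[x^{1}_{\max},x^{2}_{\max}]$ contributes with the opposite sign, so nothing follows. The correct comparison parameterizes the whole excursion $\{x\ge\Delta\}$ of each cycle as a graph $x=X_i(u)$ (legitimate since $\dot u=-x<0$ there), uses $X_2(u)\ge X_1(u)\ge\Delta$, the monotonicity of $F$ on $[\Delta,+\infty)$, and $F(\Delta)=0$ to control the endpoints; you flag this as ``the genuine technical difficulty'' but do not carry it out, and the parameterization you propose, $u=u_i^{\pm}(x)$, is precisely the one that fails there. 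Inside the strip the pointwise comparison is also incomplete: it applies only on the $x$-interval covered by \emph{both} cycles, and if $\Gamma_1$ does not reach one of the lines $x=\pm\Delta$ then the branches of $\Gamma_2$ cover an extra subinterval of the strip on which $-xF>0$, contributing \emph{positively} to $J(\Gamma_2)$ and against the claimed inequality; this case must be excluded or the arcs regrouped, and the hypotheses $F(\Delta)=F(-\Delta)=0$ and $F(\pm\infty)=\pm\infty$ are used exactly at this point in the classical proofs. You also use without justification that each closed orbit meets the curve $u=\lambda F(x)$ exactly twice, so that it splits into exactly one upper and one lower graph over $x$ (true, via the sign--definiteness of $\dot u=-x$ on each half--plane, but it needs saying). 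As written, the uniqueness argument is a correct outline of the Sansone--Massera energy comparison with its two hardest steps asserted rather than proved; for a complete argument follow Theorem 4.2 of Chapter 4 of \cite{Zha1992}.
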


\begin{theorem}[\cite{Sansone}]\label{Sansone2}
Consider the differential system \eqref{S1} and assume that $f$ is continuous and that $F(\pm\infty)=\pm\infty$. Suppose that there exists $\delta>0$ such that $f(x)<0$ for $|x| < \delta$ and $f(x)>0$ for $|x|>\delta$. Then, the differential system \eqref{S1} has a unique limit cycle, which is stable.
\end{theorem}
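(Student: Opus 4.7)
The plan is to work in the Liénard plane. First I apply the substitution $u = y + \lambda F(x)$ to \eqref{S1}, producing the equivalent planar system
\[
\dot x \;=\; u - \lambda F(x), \qquad \dot u \;=\; -x,
\]
whose divergence equals $-\lambda f(x)$ and for which the squared radius $V(x,u)=(x^2+u^2)/2$ satisfies $\dot V = -\lambda x F(x)$. From the hypotheses, $F$ is strictly decreasing on $(-\delta,\delta)$, strictly increasing on $(-\infty,-\delta)$ and on $(\delta,+\infty)$, and vanishes only at $0$, $b$, $a$, where $b<-\delta<0<\delta<a$ are the two additional roots uniquely determined by $F(\pm\infty)=\pm\infty$. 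Hence $x F(x)>0$ on $(-\infty,b)\cup(a,+\infty)$ (contracting region, $\dot V<0$) and $x F(x)<0$ on $(b,0)\cup(0,a)$ (expanding region, $\dot V>0$); the origin is the only equilibrium and is repelling since the divergence there equals $-\lambda f(0)>0$.

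For existence I would construct a trapping annulus: the inner boundary is a small circle around the origin, on which $\dot V>0$ so the flow points outward; the outer boundary is built from arcs of the level curves $V=R^2/2$ lying in the contracting region, joined by short segments crossed transversally by the flow. Poincaré-Bendixson in this annulus (which contains no equilibrium) then produces a stable limit cycle. Alternatively, Dragilev's Theorem \ref{DT} already recalled in the paper applies directly with $a_1=-\delta$, $a_2=\delta$ and $k>\max\{-b,a\}$.

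The main obstacle is uniqueness. Suppose two limit cycles $\Gamma_1\subsetneq\Gamma_2$ exist. Each must cross both of the lines $x=a$ and $x=b$: otherwise the cycle would stay in the expansion slab $\{b<x<a\}$, forcing $V$ to increase strictly around the loop, a contradiction. Integrating $\dot V$ around each cycle and using $du=-x\,dt$ yields the identity $\oint_{\Gamma_i} F(x)\,du=0$ for $i=1,2$. Parameterizing each $\Gamma_i$ by $u\in[u_B^i,u_A^i]$ with right arc $x_R^i(u)>0$ and left arc $x_L^i(u)<0$, this rewrites as
\[
\int_{u_B^i}^{u_A^i}\bigl[F(x_R^i(u))-F(x_L^i(u))\bigr]du \;=\; 0, \qquad i=1,2.
\]
Non-crossing of orbits forces $x_R^2(u)>x_R^1(u)$ and $x_L^2(u)<x_L^1(u)$ whenever both arcs exist. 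In the outer portion of the $u$-range where $x_R^i\ge a$ and $x_L^i\le b$, $F$ is strictly monotone with the correct sign, so termwise $F(x_R^2)-F(x_L^2)>F(x_R^1)-F(x_L^1)$; moreover $\Gamma_2$ has additional $u$-range beyond $[u_B^1,u_A^1]$ contributing further positive terms. In the inner portion (inside the slab $b\le x\le a$) the comparison is delicate --- this is the technical core of Sansone's argument --- but one shows, by further splitting at $x=\pm\delta$ and exploiting the signs of $F$ on $(b,0)$ and $(0,a)$ together with the non-crossing of arcs, that the inner contribution cannot cancel the strictly positive outer contribution. Consequently $\oint_{\Gamma_2}F(x)\,du>\oint_{\Gamma_1}F(x)\,du=0$, contradicting $\oint_{\Gamma_2}F(x)\,du=0$.

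Once uniqueness is established, stability is automatic: orbits starting near the repelling origin spiral outward, and orbits in the trapping annulus are forward-confined, so both the interior and exterior of the unique cycle lie in its basin. The hard part throughout is the inner-strip arc-by-arc estimate in the uniqueness step, which must be carried out without any symmetry hypothesis between $a$ and $-b$; this is precisely where Sansone's bookkeeping (as presented in Chapter~4 of \cite{Zha1992}) is needed.
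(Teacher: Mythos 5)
The paper does not actually prove this statement: Theorem \ref{Sansone2} is quoted from Sansone, with the proof deferred to Chapter 4 of \cite{Zha1992}, so there is no in-paper argument to compare yours against. Judged on its own, your setup is the right one — the Liénard-plane change of variables $u=y+\lambda F(x)$, the energy $V=(x^2+u^2)/2$ with $\dot V=-\lambda xF(x)$, the identity $\oint_{\Gamma}F(x)\,du=0$ on any closed orbit, and existence via Dragilev or a trapping annulus are all correct and standard. But the uniqueness step, which is the entire content of the theorem, is not actually carried out: the sentence ``one shows, by further splitting at $x=\pm\delta$ \dots that the inner contribution cannot cancel the strictly positive outer contribution'' is exactly the claim to be proved, and you assert it rather than prove it. The inner-strip comparison requires parameterizing the arcs by $x$ (using $F(x)\,du=\dfrac{-xF(x)}{u-\lambda F(x)}\,dx$) and showing that the positive contribution from $b\le x\le a$ strictly decreases as the cycle grows because the denominator $|u-\lambda F(x)|$ grows, while the contributions from $x>a$ and $x<b$, parameterized by $u$, are of the contracting sign and strictly increase in magnitude. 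None of this bookkeeping appears, and your stated inequality $F(x_R^2)-F(x_L^2)>F(x_R^1)-F(x_L^1)$ together with the conclusion $\oint_{\Gamma_2}F\,du>0$ has the orientation of $du$ on the right and left arcs untracked ($\dot u=-x$ makes the right arc descend in $u$), so even the sign of the claimed outer contribution is not verified.

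There is also a smaller but genuine logical slip: you claim every closed orbit must cross \emph{both} lines $x=a$ and $x=b$, justified by ``otherwise the cycle would stay in the slab $\{b<x<a\}$.'' That justification only rules out the cycle being entirely contained in the slab; it does not rule out a cycle that crosses $x=a$ but never reaches $x=b$ (or vice versa). Your subsequent parameterization of the ``outer portion'' with $x_R^i\ge a$ and $x_L^i\le b$ on a common $u$-interval silently assumes both crossings, so this case analysis needs to be supplied (or the integral must be split purely by the location of $x$, without assuming both crossings, as in the classical proofs). As written, the proposal reproduces the frame of Sansone's argument but omits its core estimate.
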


The next theorem is due to Massera \cite{Massera}. A proof for it can also be found in \cite{Zha1992}, see Theorem 4.4 of its Chapter 4.

\begin{theorem}[\cite{Massera}]\label{Massera}
Consider the differential system \eqref{S1} and assume that $f$ is continuous. Suppose that there exist $\delta_{-1}<0<\delta_1$ such that $f(x)<0$ for $x \in (\delta_{-1},\delta_1)$ and $f(x)>0$ for $(-\infty,\delta_{-1}) \cup (\delta_1,\infty)$, and that  $f$ is nonincreasing in $(-\infty,0)$ and nondecreasing in $(0,+\infty)$. Then, the differential system \eqref{S1} has a unique limit cycle, which is stable.
\end{theorem}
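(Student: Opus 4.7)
My plan is to reduce system \eqref{S1} to the Li\'enard plane, invoke Dragil\"ev's Theorem (Theorem \ref{DT} in the excerpt) for the existence of a stable limit cycle, and then prove uniqueness by a classical comparison argument between two hypothetical nested closed orbits that exploits the monotonicity of $f$.

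The change of variables $u = y + \lambda F(x)$ is a global diffeomorphism of $\R^2$ converting \eqref{S1} into the Li\'enard form $\dot x = u - \lambda F(x)$, $\dot u = -x$, and preserves orbits together with their stability type. Under Massera's hypotheses, $F$ is strictly decreasing on $(\delta_{-1},\delta_1)$, strictly increasing on its complement, concave on $(-\infty,0]$ and convex on $[0,\infty)$ (since $f'\le 0$ on the left and $f'\ge 0$ on the right of $0$), and therefore $F(\pm\infty)=\pm\infty$. Hypotheses \textbf{(B1)}--\textbf{(B4)} of Dragil\"ev's theorem hold with $g(x)=x$, $a_1=\delta_{-1}$, $a_2=\delta_1$, and any $k>\max\{-\delta_{-1},\delta_1\}$, producing at least one stable limit cycle.

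For uniqueness, suppose by contradiction that $\Gamma_1 \subset \mathrm{int}(\Gamma_2)$ are two distinct closed orbits in the Li\'enard plane. Associate to each the characteristic exponent $\chi(\Gamma_i) = -\lambda \oint_{\Gamma_i} f(x(t))\,dt$, whose sign determines the hyperbolic stability of $\Gamma_i$. I would split each cycle into its upper ($\dot x>0$) and lower ($\dot x<0$) arcs, parametrize by $x$ using $dt = dx/(u-\lambda F(x))$, and combine the two branches to write
\[ \oint_{\Gamma_i} f(x)\,dt = \int_{x_{\min}^{(i)}}^{x_{\max}^{(i)}} f(x)\cdot\frac{u_i^+(x) - u_i^-(x)}{\bigl(u_i^+(x) - \lambda F(x)\bigr)\bigl(u_i^-(x) - \lambda F(x)\bigr)}\,dx, \]
where $u_i^\pm(x)$ are the upper/lower branches of $\Gamma_i$. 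Since $u_i^+ > \lambda F > u_i^-$, the rightmost factor is pointwise positive. Strict nesting forces $u_2^+>u_1^+$ and $u_2^-<u_1^-$ on the common $x$-interval, making this positive weight strictly smaller on $\Gamma_2$ at each shared $x$. The sign pattern of $f$ (negative on $(\delta_{-1},\delta_1)$, positive outside) together with the monotonicity hypothesis should give the strict inequality $\chi(\Gamma_1) > \chi(\Gamma_2)$.

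The main obstacle will be precisely this last comparison. Inside the strip $(\delta_{-1},\delta_1)$ where $f<0$, the smaller weight on $\Gamma_2$ makes the integrand less negative, \emph{increasing} $\oint f\,dt$; outside the strip, where $f>0$, the smaller weight \emph{decreases} $\oint f\,dt$. These two effects pull in opposite directions and must be balanced arc-by-arc, pairing symmetrically opposite portions of the orbits and invoking the monotonicity of $f$ to dominate the transition neighborhoods of $\delta_{\pm 1}$ and the overflow arcs of $\Gamma_2$ that extend beyond the $x$-range of $\Gamma_1$. Once the strict inequality is in hand, it, together with the Poincar\'e--Bendixson dichotomy for the annulus bounded by the two cycles, excludes every possible stability configuration for two nested limit cycles, forcing uniqueness; the stability of the unique cycle is already guaranteed by Dragil\"ev's theorem.
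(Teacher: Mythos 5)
The paper itself does not prove this statement: it is imported from Massera \cite{Massera} with a pointer to Theorem 4.4 of Chapter 4 of \cite{Zha1992}, so your argument has to stand entirely on its own. The existence half is fine: the substitution $u=y+\lambda F(x)$ does put \eqref{S1} in Li\'enard form, $F(\pm\infty)=\pm\infty$ does follow from the sign and monotonicity hypotheses on $f$, and conditions \textbf{(B1)}--\textbf{(B4)} of Theorem \ref{DT} are easily checked (though note $f$ is only assumed continuous, so you should argue convexity/concavity of $F$ from the monotonicity of $f$ directly rather than from $f'$).

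The uniqueness half, however, has a genuine gap exactly at the point you yourself flag as ``the main obstacle,'' and that obstacle is the entire content of the theorem, not a loose end. Writing $\oint_{\Gamma_i}f\,dt=\int f(x)\,w_i(x)\,dx$ with the positive weight $w_i(x)=\bigl(u_i^+(x)-\lambda F(x)\bigr)^{-1}+\bigl(\lambda F(x)-u_i^-(x)\bigr)^{-1}$ (your displayed weight has the wrong sign, since $u_i^-<\lambda F<u_i^+$), nesting makes $w_2<w_1$ pointwise, which pushes the two integrals in \emph{opposite} directions on $\{f<0\}$ and on $\{f>0\}$; there is no symmetry available to ``pair opposite portions'' of two arbitrary nested cycles, and the monotonicity of $f$ never enters the integrand in any way you have identified. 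Divergence-integral comparisons of nested cycles are the engine behind a \emph{different} family of uniqueness theorems, those assuming monotonicity of $F(x)/x$ or $f(x)/g(x)$ (compare condition \textbf{(D4)} and Theorem \ref{Figueiredo} in the Appendix). Under Massera's hypotheses the classical mechanism is genuinely different and genuinely pointwise: concavity of $F$ on $(-\infty,0]$ and convexity on $[0,\infty)$ with $F(0)=0$ give $F(kx)/k\geq F(x)$ for all $x$ and all $k\geq1$, so the homothety $T_k(x,u)=(kx,ku)$ sends a closed orbit to a curve crossed by the flow in a single direction; expanding the inner cycle by $T_k$ until it first touches the outer one produces the contradiction without any sign balancing. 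A secondary issue: even if you had the strict inequality $\chi(\Gamma_1)>\chi(\Gamma_2)$, the configuration $\chi(\Gamma_1)=0>\chi(\Gamma_2)$ with $\Gamma_1$ semistable (attracting from inside, as forced by the source at the origin, repelling from outside) is not excluded by the Poincar\'e--Bendixson dichotomy, so your endgame would also need repair.
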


The next theorem is due to Figueiredo \cite{Figueiredo}. A proof for it can also be found in \cite{Ye}, see Theorem 6.9.

\begin{theorem}[\cite{Figueiredo}]\label{Figueiredo}
Consider the differential system \eqref{S1} and assume that $F(x)\not\equiv0$ in a neighborhood of the origin. Suppose that there exists $\delta>0$ such that $x\,F(x)\leq0$ for $|x|\leq\delta$, $F(x)\geq0$ for $x>\delta$, and $F(x)/x$ is nonincreasing in $(-\infty,0)$ and nondecreasing in $[\delta,+\infty)$. Then, the differential system \eqref{S1} has at most one limit cycle.
\end{theorem}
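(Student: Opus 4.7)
I will prove this uniqueness result by contradiction, via a classical Li\'enard-plane comparison argument in the style of Zhang Zhifen and Ye. The first move is to pass to Li\'enard coordinates $(x,u)$ with $u = y + F(x)$, under which system \eqref{S1} becomes $\dot x = u - F(x)$, $\dot u = -x$, with periodic orbits in bijection. The fundamental identity
\[
\frac{d}{dt}\!\left(\frac{x^2+u^2}{2}\right) = -\,xF(x)
\]
yields along any closed orbit $\Gamma$ the integral constraint $\oint_\Gamma xF(x)\,dt = 0$, equivalently $\oint_\Gamma F(x)\,du = 0$. Supposing now that two distinct closed orbits $\Gamma_1, \Gamma_2$ exist, both must encircle the origin (the unique equilibrium), so they are nested; assume $\Gamma_1 \subset \mathrm{int}(\Gamma_2)$.

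Next I exploit the geometry. Because $\dot u = -x$, the variable $u$ is strictly monotone along each of the halves $\{x>0\}$ and $\{x<0\}$ of $\Gamma_i$. Hence $\Gamma_i$ crosses the $u$-axis transversely at two points $(0,a_i)$ and $(0,-b_i)$ with $a_i, b_i > 0$, and its right and left halves are graphs $x = \rho_i(u) \geq 0$ and $x = \sigma_i(u) \leq 0$ for $u \in [-b_i, a_i]$. The strict nesting, together with transversality $\dot x = u \neq 0$ on the $u$-axis away from the origin, gives $-b_2 < -b_1 < 0 < a_1 < a_2$, and on the common range $\sigma_2(u) < \sigma_1(u) \leq 0 \leq \rho_1(u) < \rho_2(u)$.

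The heart of the proof is to rewrite the closed-orbit identity as
\[
\int_{-b_i}^{a_i}\bigl[F(\rho_i(u)) - F(\sigma_i(u))\bigr]\,du = 0, \qquad i = 1, 2,
\]
and to obtain a contradiction by a pointwise comparison of integrands. I would split the $u$-range according to which of the regions $\{x \leq -\delta\}$, $\{|x| \leq \delta\}$, $\{x \geq \delta\}$ the values $\rho_i(u)$ and $\sigma_i(u)$ lie in. On the exterior pieces, writing
\[
F(\rho_2) - F(\rho_1) = \rho_2 \cdot \frac{F(\rho_2)}{\rho_2} - \rho_1 \cdot \frac{F(\rho_1)}{\rho_1}, \qquad F(\sigma_1) - F(\sigma_2) = \sigma_1 \cdot \frac{F(\sigma_1)}{\sigma_1} - \sigma_2 \cdot \frac{F(\sigma_2)}{\sigma_2},
\]
the monotonicity of $F(x)/x$ (nondecreasing on $[\delta, \infty)$, nonincreasing on $(-\infty, 0)$) together with $\rho_1 < \rho_2$ and $\sigma_2 < \sigma_1$ forces a definite sign. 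On the central strip, the hypothesis $xF(x) \leq 0$ pins down the sign of $F(\rho_i(u))$ and $F(\sigma_i(u))$ in a manner compatible with the exterior estimate. Assembling the three regions, with strictness somewhere guaranteed by $F \not\equiv 0$ near the origin, produces a strict inequality between the two integrals — the desired contradiction.

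The main obstacle is the sign bookkeeping on the central strip $|x|\leq \delta$, where the monotonicity hypothesis on $F(x)/x$ is not assumed and one must rely purely on $xF(x)\leq 0$. In particular one must carefully treat the transition arcs where one orbit lies in the strip while the other has already exited, so that the contributions from the inner and outer regions add rather than cancel. A secondary technical point is ensuring strict nesting of the $u$-axis intercepts ($a_1 < a_2$, $b_1 < b_2$), which follows from transversality of the $u$-axis to the flow. Once these are in place, the monotone pointwise comparison dominates and yields the contradiction.
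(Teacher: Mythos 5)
First, note that the paper does not prove this statement at all: it is quoted from de Figueiredo, with the proof referenced to Theorem 6.9 of Ye's book, so there is no in-paper argument to compare yours against. Your skeleton (Li\'enard coordinates $u=y+F(x)$, the closed-orbit identity $\oint F(x)\,du=0$, nesting of two hypothetical cycles around the unique equilibrium, writing each half as a graph $x=\rho_i(u)$, $x=\sigma_i(u)$) is the standard opening of every uniqueness proof of this type, and those steps are correct.

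The gap sits exactly where you flag ``the main obstacle,'' and it is not bookkeeping but a missing idea. On the central strip, when both $\rho_1(u)$ and $\rho_2(u)$ lie in $[0,\delta]$, the only available information is $F\le 0$ there; the difference $F(\rho_2(u))-F(\rho_1(u))$ has no definite sign (take any $F$ that dips and recovers inside $[0,\delta]$), so a pointwise comparison in the variable $u$ cannot close the argument. The classical resolution is to cut the orbits by the vertical lines $x=\pm\delta$ and, on the arcs inside the strip, integrate in $x$ rather than in $u$: there $F(x)\,du = \dfrac{-x\,F(x)}{u_i(x)-F(x)}\,dx$, the numerator is sign-definite by $xF(x)\le 0$, and the two orbits are compared through the denominators $u_2(x)-F(x)$ versus $u_1(x)-F(x)$ (which nesting does order), over a \emph{common} $x$-domain. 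This simultaneously disposes of your ``secondary'' difficulty that the $u$-ranges of the strip arcs of $\Gamma_1$ and $\Gamma_2$ do not coincide. A second, independent gap is on the left: the hypotheses give no sign for $F$ on $x<-\delta$ (only $F\ge 0$ for $x>\delta$), so the factorization $F(\sigma)=\sigma\cdot F(\sigma)/\sigma$ together with monotonicity of $F(x)/x$ does not yield a definite inequality for $F(\sigma_1)-F(\sigma_2)$; the published proof necessarily treats the two half-planes asymmetrically, reflecting the asymmetric hypotheses (note also that $x_2^*\le -x_1^*$ is assumed in {\bf (D4)} precisely to make this theorem applicable). Finally, you must rule out a closed orbit lying entirely in $|x|\le\delta$; this is where $F\not\equiv 0$ near the origin actually enters, via $\oint xF(x)\,dt=0$ and $xF(x)\le 0$. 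As written, the proposal is a correct outline of the framework, but the decisive comparison is asserted rather than established, and in the form stated it would fail.
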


Now we prove Proposition \ref{ap:prop}.

\begin{proof}[Proof of Proposition \ref{ap:prop}]
Item {\bf (D1)} is consequence of Theorem \ref{Sansone1} if we take $\delta_{-1}=x_M$, $\delta_1=x_m$, and $\Delta=-x_1^*=x_2^*$. Item {\bf (D2)} is consequence of Theorem \ref{Sansone2} if we take $\delta=-x_M=x_m$. Item {\bf (D3)} is consequence of Theorem \ref{Massera} if we take $\delta_{-1}=x_M$ and $\delta_1=x_m$. Item {\bf (D4)} is consequence of Theorem \ref{Figueiredo} if we take $\delta=x_2^*$.
\end{proof}

For more analytical conditions ensuring the uniqueness of limit cycles in Li\'{e}nard differential systems we cite, for instance, the books \cite{Ye,Zha1992},  the works \cite{MH1997,CarVil2005} where the authors relaxed the symmetry assumption $x_1^*=-x_2^*$ of the Sansone's Theorem \ref{Sansone1}, and also the works \cite{VilZan2016,HVZ2018,VilZan2018}. 

\section*{Acknowledgements}

We are grateful to the anonymous referee for valuable comments and suggestions.

P.T.C. is partially supported by Fundação de Amparo à Pesquisa do Estado de São Paulo (FAPESP) grants 2019/00976-4 and 2013/24541-0. D.D.N is partially supported by Fundação de Amparo à Pesquisa do Estado de São Paulo (FAPESP) grant 2018/16430-8, and by Conselho Nacional de Desenvolvimento Científico e Tecnológico (CNPq) grants 306649/2018-7 and 438975/2018-9. Both authors are partially supported by Coordenação de Aperfeiçoamento de Pessoal de Nível Superior (CAPES), Program CSF-PVE, grant 88881.030454/2013-0.

\bibliographystyle{abbrv}
\bibliography{biblio.bib}

\end{document}